\pgfplotsset{compat=1.14}
\numberwithin{equation}{section} 
\newtheorem{proposition}{Proposition}
\theoremstyle{remark}
\newtheorem*{remark}{Remark}
\DeclareSymbolFontAlphabet{\mathbbm}{bbold}
\DeclareSymbolFontAlphabet{\mathbb}{AMSb}%
\newcommand{\Rbb}{\mathbb{R}}
\newcommand{\Pbb}{\mathbb{P}}
\newcommand{\Tcal}{\mathcal{T}}
\newcommand{\Hcal}{\mathcal{H}}
\newcommand{\Rcal}{\mathcal{R}}
\newcommand{\Scal}{\mathcal{S}}
\newcommand{\Ecal}{\mathcal{E}}
\newcommand{\abf}{\mathbf{a}}
\newcommand{\cbf}{\mathbf{c}}
\newcommand{\sbf}{\mathbf{s}}
\newcommand{\vbf}{\mathbf{v}}
\newcommand{\xbf}{\mathbf{x}}
\newcommand{\ybf}{\mathbf{y}}
\newcommand{\Abf}{\mathbf{A}}
\newcommand{\Vbf}{\mathbf{V}}
\newcommand{\Gbf}{\mathbf{G}}
\newcommand{\Rbf}{\mathbf{R}}
\newcommand{\pos}{\mathrm{x}}
\newcommand{\vel}{\mathrm{v}}
\newcommand{\posbf}{\mathbf{x}}
\newcommand{\velbf}{\mathbf{v}}
\newcommand{\dpos}{\dot{\mathbf{x}}}
\newcommand{\dvel}{\dot{\mathbf{v}}}
\newcommand{\rposbf}{\mathbf{y}}
\newcommand{\rvelbf}{\mathbf{w}}
\newcommand{\sbfi}{\mathrm{s}}
\newcommand{\fedim}{\kappa} 
\newcommand{\idx}{\ensuremath{\sigma}} 
\newcommand{\Gfun}{\Gbf}
\newcommand{\mGfun}{G} 
\newcommand{\jacG}[1]{\ensuremath{J_\mGfun^{#1}}}
\newcommand{\jacGhat}[1]{\ensuremath{J_{\widehat{\mGfun}}^{#1}}}
\newcommand{\deimb}[1]{\ensuremath{U^{#1}}} 
\newcommand{\deimi}[1]{\ensuremath{P^{#1}}} 
\newcommand{\jacE}{J_\mdiscref} 
\newcommand{\tsind}{\tau} 
\newcommand{\stmat}{T} 
\newcommand{\decd}{\mathbf{g}} 
\newcommand{\decdi}{\mathrm{g}} 
\newcommand{\ham}{H}
\newcommand{\nlham}{h} 
\newcommand{\rham}{\ham_{\rom}} 
\newcommand{\hrham}{\ham_{\hr}} 
\newcommand{\nlrham}{\nlham_{\rom}} 
\newcommand{\nlhrham}{\nlham_{\hr}} 
\newcommand{\redb}{\Psi} 
\newcommand{\coef}{Z} 
\newcommand{\cX}{Y} 
\newcommand{\cV}{W} 
\newcommand{\cXbf}{\mathbf{Y}} 
\newcommand{\cVbf}{\mathbf{W}} 
\newcommand{\cXs}{Y_{\star}} 
\newcommand{\cVs}{W_{\star}} 
\newcommand{\fX}{X} 
\newcommand{\fV}{V} 
\newcommand{\fXbf}{\mathbf{X}} 
\newcommand{\dfX}{\dot{\fX}}
\newcommand{\dfV}{\dot{\fV}}
\newcommand{\dcX}{\dot{\cX}}
\newcommand{\dcV}{\dot{\cV}}
\newcommand{\sol}{\Theta}
\newcommand{\dfos}{\dot{\sol}_{\fom}}
\newcommand{\fos}{\sol_{\fom}} 
\newcommand{\fostar}{\sol_{\fom,\star}} 
\newcommand{\fosvd}[1]{\sol_{\fom,#1}} 
\newcommand{\ros}{\sol_{\rom}} 
\newcommand{\rostar}{\sol_{\rom,\star}} 
\newcommand{\rX}{\fX_r} 
\newcommand{\rV}{\fV_r} 
\newcommand{\mdiscref}{E} 
\newcommand{\prm}{\eta} 
\newcommand{\retr}{\Rcal} 
\newcommand{\prpsi}{\Upsilon}
\newcommand{\resid}{R} 
\newcommand{\errappr}{\varepsilon} 
\newcommand{\merrappr}{\Ecal} 
\newcommand{\prms}{\Gamma} 
\newcommand{\spei}{p_I^{\star}} 
\newcommand{\spav}{p_A^{\star}} 
\newcommand{\spdb}{p_U^{\star}} 
\newcommand{\eimuf}{\delta} 
\newcommand{\prma}{\alpha} 
\newcommand{\prmsd}{\sigma} 
\newcommand{\wn}{k_0} 
\newcommand{\mhrdiscref}{\mdiscref_{\hr}}
\newcommand{\stm}{\text{St}(n,\Rbb^N)} 
\newcommand{\pdeg}{k} 
\newcommand{\solerr}{\Ecal} 
\newcommand{\hamerr}{\Ecal_H} 
\newcommand{\norm}[1]{\left\lVert#1\right\rVert}
\newcommand{\normF}[1]{\lVert#1\rVert_F}
\newcommand{\cone}{C_1} 
\newcommand{\ctwo}{C_2}
\newcommand{\errapprt}{\widetilde{\errappr}} 
\newcommand{\merrapprt}{\widetilde{\merrappr}}
\newcommand{\coeffint}{\cbf} 
\newcommand{\mcoeffint}{C} 
\newcommand{\sbfnt}{\xi} 
\newcommand{\msbfnt}{\Xi} 
\newcommand{\nti}{\nu} 
\newcommand{\errind}{\mathbb{E}} 
\newcommand{\avgsolerr}{\solerr^{\avg}} 
\newcommand{\Psbfnt}{\mathcal{P}}
\newcommand{\spmat}{\Pi}
\newcommand{\ns}{N_s} 
\newcommand{\nt}{N_t} 
\DeclareMathOperator{\hr}{hr}
\DeclareMathOperator{\old}{old}
\DeclareMathOperator{\new}{new}
\DeclareMathOperator{\fom}{f}
\DeclareMathOperator{\rom}{r}
\DeclareMathOperator{\avg}{avg}
\newcommand{\email}[1]{\protect\href{mailto:#1}{#1}}
\title{Adaptive hyper-reduction of non-sparse operators:\\
 application to parametric particle-based kinetic plasma models}
\author{Cecilia Pagliantini\thanks{Dipartimento di Matematica,
			  Universit\`a di Pisa, Italy.
  \email{cecilia.pagliantini@unipi.it}\\
  Funding from the MIUR Excellence Department Project awarded to the Department of Mathematics, University of Pisa, CUP I57G22000700001, and from the INDAM/GNCS 2024 project CUP E53C23001670001 are acknowledged.}
\and Federico Vismara\thanks{
\email{f.vismara@tue.nl}.}}
\date{December 22, 2025}
\begin{document}

\maketitle

\begin{abstract}
This paper proposes an adaptive hyper-reduction method to reduce the computational cost associated with the simulation of parametric particle-based kinetic plasma models, specifically focusing on the Vlasov-Poisson equation. Conventional model order reduction and hyper-reduction techniques are often ineffective for such models due to the non-sparse nature of the nonlinear operators arising from the interactions between particles.
To tackle this issue, we propose an adaptive, structure-preserving hyper-reduction method that leverages a decomposition of the discrete reduced Hamiltonian into a linear combination of terms, each depending on a few components of the state. The proposed approximation strategy allows to: (i) preserve the Hamiltonian structure of the problem; (ii) evaluate nonlinear non-sparse operators in a computationally efficient way; (iii) overcome the Kolmogorov barrier of transport-dominated problems via evolution of the approximation space and adaptivity of the rank of the solution. 
The proposed method is validated on numerical benchmark simulations, demonstrating stable and accurate performance with substantial runtime reductions compared to the full order model.
\end{abstract}


\section{Introduction}

Computational methods for real-time and many-query simulation of parametrized differential equations often require prohibitively high computational costs to achieve sufficiently accurate numerical solutions. During the last decades, model order reduction \cite{prudhomme02,BGW15,QMN16,HPRo22} has proved successful in providing low-complexity high-fidelity surrogate models that allow rapid and accurate simulations under parameter variation, thus enabling the numerical simulation of increasingly complex problems.
However, in the presence of operators with general nonlinear dependence
on the state, the computational cost of solving these surrogate models might still depend on the size of the underlying full model, resulting in simulation times that hardly improve over the original system simulation. While a non-intrusive, purely data-driven approach to model order reduction could in principle circumvent this bottleneck, we focus in this work on intrusive techniques, where numerical simulations of the surrogate model require potentially expensive evaluations of the nonlinear operators. This is a well-known issue in model order reduction and has led to the so-called hyper-reduction \cite{R09} methods.
Most of these techniques consist in approximating the high-dimensional nonlinear terms using sparse sampling via interpolation among samples of the nonlinear operators. This is the rationale behind missing point estimation \cite{AWWB08}, the empirical interpolation method (EIM) \cite{BMNP04,GMNP07}, the discrete empirical interpolation method (DEIM) \cite{CS10}, Gauss-Newton with approximated
tensors (GNAT) \cite{GNAT13} and the trajectory piecewise linear (TPWL) method \cite{tpwl}.
The computational efficiency of hyper-reduction techniques is based on the assumption that the nonlinear operator depends sparsely on the system state or, in other words, the approximation requires only the evaluation of few entries of the nonlinear vector field which, in turns, depend only on few entries of the reconstructed state. This assumption allows to reconstruct only few components of the high-dimensional state, thus reducing the cost of evaluating nonlinear terms to something proportional to the number of interpolation indices rather than the full dimension.
Although this assumption is satisfied whenever the nonlinear system at hand stems from a local discretization of a PDE, e.g., via finite element or finite volume schemes, many cases of interest are ruled out. For example, interacting particle systems often involve nonlinear operators that depend on the distance or on some interaction of each particle with all other particles in the system.
In such situations, hyper-reduction techniques can prove ineffective.
In this work we focus on one of such models, namely the system resulting from a 
particle-based discretization of the Vlasov-Poisson equation.

\textbf{Model order reduction of kinetic plasma models.}
The Vlasov-Poisson equation is a kinetic plasma model that describes the evolution of the distribution function of a family of collisionless charged particles moving under the action of a self-consistent electric field. 
Among various approximation techniques, particle-in-cell (PIC) discretizations \cite{BL91} approximate the distribution function of the plasma using a finite number of computational macro-particles, which are then advanced along the characteristics of the Vlasov equation. One of the major advantages of PIC discretizations is that they are able to preserve the Hamiltonian structure of the continuous Vlasov-Poisson model. However, 
the necessity of resolving the smallest length scales and the slow convergence of PIC methods entail that a large number of macro-particles is required to achieve accurate approximations. Therefore, the computational cost associated with the numerical simulation of the Vlasov-Poisson equation can be demanding even for one-dimensional problems, especially in multi-query scenarios, where a numerical solution has to be computed for many instances of input parameters.

The topic of model order reduction of the Vlasov-Poisson equation has received considerable attention in recent years. We focus here on low-rank approximation methods \cite{Kor15,KS16,EL17,TK23,TCGLCB23}, although we mention that alternative approaches, based on high-performance computing algorithms \cite{Gr06,RHK24} and 
neural network techniques \cite{CGZ24,BBP24,FDNV25}, have been developed for kinetic plasma models. Model order reduction of the Vlasov-Poisson equation is challenged by several factors. First, due to its multi-scale nature, the Vlasov-Poisson problem lacks favorable global reducibility properties, in the sense that large approximation spaces are typically required to achieve accurate approximations.
Second, since the Vlasov-Poisson equation admits a Hamiltonian formulation \cite[Section 1.6]{MR99}, it is crucial that numerical methods applied to this problem are designed so as to preserve its geometric structure at the discrete level.
Third, even if the dimension of the reduced order model is much smaller than that of the original problem, there is no hope of attaining significant reductions of the computational cost without an efficient treatment of the nonlinear operators involved. Indeed, the numerical solution of the Vlasov-Poisson problem via particle methods requires to determine the position of each macro-particle with respect to the computational grid employed for spatial discretization of the Poisson equation, the so-called particle-to-grid map. 
Since the knowledge of the expansion coefficients of the reduced order solution alone is not sufficient to determine the particles' positions, the approximated solution has to be reconstructed at each time step and for each test parameter, resulting in unbearable computational costs. 
To the best of our knowledge efficient hyper-reduction in this context is an open problem.

A recent work \cite{HPR24} addresses structure-preserving hyper-reduction of the Vlasov-Poisson equation in the number of particles by first approximating the electric potential via dynamic mode decomposition (DMD) \cite{Sch10} and then performing empirical interpolation of the particle-to-grid map. However, this approach relies on the fact that the approximation of the electric potential provided by DMD is sufficiently accurate, which might not be valid for a general range of parameters and over long times.

\textbf{Goals and contributions.}
In this work we consider a different approach and propose an adaptive structure-preserving hyper-reduction scheme. The effectiveness of the proposed method relies on the existence of a decomposition of the discrete Hamiltonian into a linear combination of terms, each depending on few components of the state. While this assumption is always satisfied in the case of local discretizations, such as finite differences or finite elements methods, this approach is not directly applicable to the Vlasov-Poisson problem or, in general, to Hamiltonian systems arising from PIC discretizations, as such a decomposition is not immediately available. The goal of this work is to generalize the setting of \cite{PV23,PV25} to account for nonlocal discretizations. The resulting hyper-reduction strategy is combined with an explicit time integrator for the evolution of the reduced basis and coefficients that allows to fully exploit the separability of the Vlasov-Poisson Hamiltonian. This results in a method that is completely decoupled from the full order problem, in the sense that it does not rely on any knowledge of the full order solution. Unlike classical non-intrusive approaches, however, the solution of a surrogate model is required, and the numerical complexity of this operation still depends on the full order dimension and on the number of test parameters. To address this, we propose a parameter sampling algorithm so that the arithmetic complexity of the resulting hyper-reduced system is linear in the full order dimension and in the number of test parameters, but does not depend on their product. A rank-adaptive strategy is developed to deal with changes in the reducibility of the solution set over time. Numerical experiments show that the hyper-reduced system provides stable and accurate simulations, while considerably reducing the runtime of the full order problem.

The remainder of the paper is organized as follows. 
\Cref{sec:non-sparse} is devoted to the illustration of the main limitations of hyper-reduction of non-sparse operators in the context of particle-based models.
In \Cref{sec:FOM} we recall the Vlasov-Poisson equations and its Hamiltonian formulation, and we define the full order model stemming from its PIC discretization. In \Cref{sec:ROM} the reduced order model is constructed and an explicit timestepping scheme is proposed for its temporal integration. The rank-adaptive approximation is described in \Cref{sec:rank-adaptive}. In \Cref{sec:hyper-reduction} we introduce a structure-preserving hyper-reduction method for the approximation of the Hamiltonian gradient, together with an \emph{a priori} convergence estimate. Finally, numerical experiments are reported in \Cref{sec:num_exp}.

\textbf{Notation.} Throughout the paper we will use capital letters to denote matrices and matrix-valued quantities, while we will use lower-case bold letters to denote vectors.
Given a matrix $A\in\Rbb^{n\times m}$, the element of $A$ in the $i$th row and $j$th column is denoted by $A_i^j\in\Rbb$, while $\Abf^j\in\Rbb^{n}$
denotes the $j$th column vector of $A$. Moreover, we denote by $\norm{A}_2$ and $\normF{A}$ the $2$-norm and the Frobenius norm of $A$, respectively.
Given a vector $\vbf\in\Rbb^n$, its $i$th entry is denoted by $\mathrm{v}_i\in\Rbb$, while its Euclidean norm is $\norm{\vbf}$.

\section{Hyper-reduction of non-sparse operators}\label{sec:non-sparse}

A major bottleneck in the construction of hyper-reduced models for particle systems is associated with the presence of nonlinear operators given as the sum of functions that depend nonlinearly on all particles of the system. Let $N$ denote the number of particles of the system and consider a function of the form
\begin{equation}\label{eq:hform}
    h(\xbf) = \sum_{i=1}^{\kappa}\sum_{\ell=1}^N f_i(\xbf_{\ell})
\end{equation}
where $\xbf\in\mathbb{R}^N$ is a vector associated with the degrees of freedom of the problem, e.g., the particles' positions, $f_i:\mathbb{R}\rightarrow\mathbb{R}$, and $\kappa, N\in\mathbb{N}$ with $\kappa\leq N$.
A typical example is when $f$ corresponds to the distance between two bodies in an interacting system, and it is thus given by $f_i(X_{\ell})=d(X_{\ell},X_i)$ where $X_i$ denotes the position of the $i$th body of the system, $d(\cdot,\cdot)$ denotes some distance, and, typically, $\kappa=N$.

We are interested in the hyper-reduction of the vector-valued function obtained by taking the gradient of $h$ in \eqref{eq:hform}, see \Cref{sec:hyper-reduction}.
Among the wide variety of hyper-reduction techniques, the empirical interpolation method (EIM) consists in a linear approximation in the span of $m\in\mathbb{N}$ basis vectors extracted from snapshots of the nonlinear operator to be reduced.
The EIM basis vectors are stored as columns of a matrix
$U\in\mathbb{R}^{N\times m}$ and $m$ pairwise distinct interpolation points are collected in the matrix $P\in\Rbb^{N\times m}$.
The EIM projection operator is then defined as $\mathbb{P}=U(P^{\top}U)^{-1}P^{\top}$ and the interpolant is
$$\mathbb{P}\nabla h(\xbf)=U(P^{\top}U)^{-1}P^{\top}\nabla h(\xbf),\qquad\forall\,\xbf\in\Rbb^{N}.$$
In principle, one could select $m+m^\prime$ sample points, with $m^\prime\geq1$ and define $P\in\Rbb^{N\times (m+m^\prime)}$. In this case, the interpolatory projection $\mathbb{P}$ is replaced with $U(P^{\top}U)^{\dagger}P^{\top}$, where $(P^\top U)^\dagger$ denotes the pseudoinverse of $P^\top U$. For the sake of simplicity we choose $m^\prime=0$ in this work, although our proposed method can also be applied to the case of oversampling.
The term $P^{\top} \nabla h(\xbf)$ samples the nonlinear function at $m$ components only, and if each of these components depend on few entries of $\xbf$, say $s\leq N$, then the computational cost to evaluate the hyper-reduced operator at each $\xbf\in\Rbb^{N}$ scales as $ms$ and does not depend on $N$. However, this assumption is not satisfied in many interesting cases.
Taking the gradient of a function as in \eqref{eq:hform} 
and performing hyper-reduction has two major limitations: (i) the gradient of $h$ results in a vector-valued function where each entry depends on all entries of the vector $\xbf$, making the evaluation of  $P^{\top} \nabla h(\xbf)$ computationally inefficient;
and (ii) the gradient structure of the nonlinear operator is not preserved.

\subsection{Gradient-preserving hyper-reduction of non-sparse operators}
To overcome the aforementioned limitations, we propose to re-write the nonlinear function $h$ in \eqref{eq:hform} as follows
\begin{equation}\label{eq:dec}
    h(\xbf)=\sum_{i=1}^\kappa \cbf_i \cdot F_i(\xbf),
    \qquad\forall\,\xbf\in\Rbb^{N},
\end{equation}
where, for any $1\leq i\leq \kappa$, $F_i:\mathbb{R}^N\rightarrow\mathbb{R}^N$ is defined such that the $\ell$th component of $F_i(\xbf)$, for $1\leq \ell\leq N$, is given by $F_i^\ell(\xbf)=f_i(\xbf_\ell)$. 
Here $\cbf_i\in\mathbb{R}^N$ is the vector with all entries equal to 1, for any $1\leq i\leq \kappa$, but more general situations can be accommodated by the decomposition \eqref{eq:dec}.
The idea is then to approximate not the gradient of $h$ but the function $h$ itself, written as in \eqref{eq:dec}, as
\begin{equation}\label{eq:hyp}
    h(\xbf)\approx \sum_{i=1}^\kappa \cbf_i \cdot \mathbb{P}_i F_i(\xbf),
    \qquad\forall\,\xbf\in\Rbb^{N},
\end{equation}
where $\{\mathbb{P}_i\}_{i=1}^{\kappa}$ are suitable projections associated with the chosen hyper-reduction technique.

Since the gradient operator is applied after hyper-reduction, the approximate operator is, by construction, still a gradient. Moreover,
for fixed $1\leq i\leq \kappa$, the vector-valued function $F_i$ depends sparsely on the data, in the sense that the $\ell$th entry of $F_i$ only depends on the $\ell$th entry of the variable.
This allows to efficiently apply hyper-reduction because evaluating $m\ll N$ entries of $F_i$ corresponds to a computational cost proportional to $m$ and not to $N$.

Although we believe that the decomposition proposed in \eqref{eq:dec} is sufficiently general and applicable to many problems, the specific choice of the functions $F_i$, of the coefficients $\cbf_i$, and of the projections $\mathbb{P}_i$ is problem-dependent and, even for one given problem, is not unique.
Therefore the optimal hyper-reduction strategy of the type \eqref{eq:hyp} will differ from problem to problem.
One aspect to consider is that the nonlinear operator needs to be reducible. As it turns out in many interacting particle systems, not all interactions are ``relevant'' to describe the dynamics of the system. Moreover, previous works have shown that dealing with nonlinear operators in the reduced space improves their reducibility; more precisely, the singular values of the snapshot matrix of \emph{projected} nonlinear operators typically exhibit a much faster decay, which ultimately results in a more efficient hyper-reduction, see \cite[Remark 3.2]{PV23}.

In this work we focus on particle-based discretizations of kinetic plasma models.
Here, the computational bottleneck is associated with the interaction between particle positions and electromagnetic fields via the so-called particle-to-grid map.

In previous works by the authors \cite{PV23,PV25}, a structure-preserving strategy based on a sparse decomposition of the Hamiltonian was proposed. The main requirement was that the reduced Hamiltonian could be written as a linear combination of $O(N)$ terms, each depending on a small number of entries of the state vector.
In the context of particle-in-cell discretizations, a sparse decomposition might not be directly available. One of the goals of this work, \Cref{sec:hyper-reduction} in particular, is to extend the framework of \cite{PV23,PV25} to address this shortcoming, and produce effective hyper-reduction techniques that retain the Hamiltonian structure of the problem.


\section{The Vlasov-Poisson equation and its numerical discretization}\label{sec:FOM}
In kinetic plasma models, the plasma is described in terms of a distribution function $f(t,x,v)$, representing the probability of having a particle occupying the position $x\in\Omega_x$ with velocity $v\in\Omega_v$ at time $t\in\Tcal=[0,T]$. 
Here we assume that $f$ also depends on a parameter $\prm$ belonging to a parameter space $\prms\subset\Rbb^P$, with $P\geq1$. In this work we focus on the 1D-1V problem by taking $\Omega:=\Omega_x\times\Omega_v\subset\mathbb{R}^2$ with $\Omega_x=[0,\ell_x]$ and $\Omega_v=\Rbb$.
The 1D-1V Vlasov-Poisson problem for one particle species and initial condition
$f(0,x,v;\prm) = f_0(\prm)$ reads
\begin{equation}\label{eq:VP-1D1V}
    \begin{cases}
        \partial_tf(t,x,v;\prm) + v\partial_xf(t,x,v;\prm)+\dfrac{q}{m}e(t,x;\prm)\partial_vf(t,x,v;\prm)=0,\\
        \partial_xe(t,x;\prm)=-\partial_{xx}\phi(t,x;\prm)
        =\rho_0+\rho(t,x;\prm)
        = \rho_0+q\displaystyle\int_{\Omega_v}f(t,x,v;\prm)\,dv,
    \end{cases}
\end{equation}
where $q$ is the electric charge, $m$ is the particle mass, 
$\rho$ is the electric charge density
and $\rho_0$ the electric charge density associated to a background charge $q_0$.
In the following, we assume that the problem is normalized so that $m=1$, $q=-1$ and $\rho_0=1$.
The function $e$ in \eqref{eq:VP-1D1V} is the unknown electric field, while $\phi$ is the associated electric potential, defined by $e(t,x;\prm)=-\partial_x\phi(t,x;\prm)$. The boundary conditions for $f$ are assumed to be periodic in space 
and we assume a Gaussian decay in velocity, namely
$f(t,x,v;\prm)\approx e^{-v^2}$ as $\lvert v\rvert\to\infty$.
Problem \eqref{eq:VP-1D1V} admits a Hamiltonian formulation with Hamiltonian given by the total energy \cite[Section 1.6]{MR99}
\begin{equation}\label{eq:Hamiltonian_cont}
    \Hcal(f)=\dfrac12\int_\Omega v^2f(t,x,v;\prm)\,dx\,dv+\dfrac{1}{2}\int_{\Omega_x}\lvert \partial_x\phi(t,x;\prm)\rvert^2\,dx.
\end{equation}

\subsection{Semi-discrete approximation}\label{sec:semidiscr}
One of the most used numerical discretizations of the Vlasov equation is
based on approximating the distribution function as the superposition of macro-particles.
We consider a particle method for the approximation of the Vlasov equation, coupled with a finite element discretization of the Poisson problem for the electric potential $\phi$. This choice results in a semi-discrete Hamiltonian system \cite{HSQL16,KKMS17}. More in detail, the distribution function $f$ is approximated by
\begin{equation}\label{eq:fpic}
    f_h(t,x,v;\prm)=\sum_{\ell=1}^N\omega_\ell\delta(x-\pos_\ell(t,\prm))\delta(v-\vel_\ell(t,\prm)),
\end{equation}
where $\delta$ is the Dirac delta and $\pos_\ell$ and $\vel_\ell$ denote the position and velocity of the $\ell$th macro-particle, respectively.
The weights $\{\omega_\ell\}_{\ell=1}^N$ in the expansion \eqref{eq:fpic} are assumed to be all equal, i.e. $\omega_\ell=\omega$ for all $\ell=1,\dots,N$ and $\omega$ is determined by integrating the Poisson equation \eqref{eq:VP-1D1V} in space and enforcing the periodic boundary conditions. This yields
$\omega=\ell_x N^{-1}$.

The time evolution of $f_h$ is derived by advancing the macro-particles along the characteristics of the Vlasov equation, which gives
\begin{equation*}
\begin{cases}
    \dpos(t,\prm)=\velbf(t,\prm), \\
    \dvel(t,\prm)=-e(t,\posbf(t,\prm);\prm),
\end{cases}
\end{equation*}
where $\posbf(t,\prm)\in\Rbb^N$ and $\velbf(t,\prm)\in\Rbb^N$
are the vector-valued quantities collecting the particles positions and velocities, respectively, at time $t$ and for a fixed parameter $\prm$.

To compute the electric field $e$ we approximate the Poisson equation with a finite element discretization. Let us consider a uniform partition of the spatial interval $\Omega_x$ as $0=x_0<x_1<\dots<x_{N_x}=\ell_x$, with $x_i=i\Delta x$ for $i=0,\dots,N_x$ and $\Delta x=\ell_x/N_x$, where $x_0$ is identified with $x_{N_x}$ in view of periodic boundary conditions in space. Let $\mathcal{P}_\pdeg(\Omega_x)\subset H^1(\Omega_x)$ be the space of continuous piecewise polynomials of degree at most $\pdeg\geq 1$ on $\Omega_x$, subject to periodic boundary conditions. Note that $\mathcal{P}_\pdeg(\Omega_x)$ is a linear subspace of $H^1(\Omega_x)$ of dimension $\pdeg N_x$. The variational problem associated to the Poisson equation reads: for every $t\in\Tcal$ and $\prm\in\prms$, find $\phi_h=\phi_h(t,\cdot;\prm)\in\mathcal{P}_\pdeg(\Omega_x)$ such that
$a(\phi_h,\psi)=g_h(\psi)$, for all $\psi\in\mathcal{P}_\pdeg(\Omega_x)$, where the bilinear form $a$ and the linear operator $g_h$ are defined as
\begin{equation*}
    a(\phi,\psi) := \int_{\Omega_x}\phi^\prime(x)\psi^\prime(x)\,dx \quad \text{ and } \quad g_h(\psi):=\int_{\Omega_x}\psi(x)\,dx-\int_\Omega f_h(t,x,v;\prm)\psi(x)\,dx\,dv.
\end{equation*}
Let $\fedim:=\pdeg N_x-1$ and $\{\lambda_i(x)\}_{i=1}^{\fedim+1}$ be a basis of $\mathcal{P}_\pdeg(\Omega_x)$. 
Let $\boldsymbol{\Phi}(t,\prm)\in\Rbb^{\fedim}$ be the vector of expansion coefficients of the semi-discrete potential $\phi_h\in\mathcal{P}_\pdeg(\Omega_x)$ in the basis $\{\lambda_i(x)\}_{i=1}^{\fedim}$,
where the $(\fedim+1)$th coefficient has been set to $0$ to single out a solution of the Poisson problem.
The variational problem associated with the Poisson equation can then be written as follows: for any $t\in\Tcal$ and $\prm\in\prms$, find
$\boldsymbol{\Phi}(t,\prm)\in\Rbb^{\fedim}$ such that
\begin{equation*}
    \stmat\boldsymbol{\Phi}(t,\prm) =  \decd(\posbf(t,\prm)),
\end{equation*}
where $\stmat\in\Rbb^{\fedim\times\fedim}$
is the stiffness matrix
defined as $\stmat_{i}^{j}:=a(\lambda_j,\lambda_i)$
and $\decd(\posbf)\in\Rbb^{\fedim}$ is the discrete electric charge density, whose $j$th entry, for $j=1,\dots,\fedim$, can be computed
using the approximation \eqref{eq:fpic} of $f$, as
\begin{equation}\label{eq:gj}
    \decdi_j(\posbf) = g_h(\lambda_j)= \int_{\Omega_x}\lambda_j(x)\,dx-\frac{\ell_x}{N}\sum_{\ell=1}^N\lambda_j(\pos_\ell(t,\prm)).
\end{equation}
By introducing the vector $\sbf\in\Rbb^\fedim$ whose $j$th entry is given by $\sbfi_j=\int_{\Omega_x}\lambda_j(x)\,dx$, and the matrix-valued function $\Lambda$ defined as 
\begin{equation*}
    \xbf\in\Rbb^N \mapsto \Lambda(\xbf)\in\Rbb^{N\times\fedim} \quad \text{ such that } \quad \Lambda(\xbf)_{\ell}^{ i}:=\lambda_i(\pos_\ell),
\end{equation*}
the discrete electric charge density can be written as
$\decd(\posbf(t,\prm))=\sbf-\ell_x N^{-1}\Lambda(\posbf(t,\prm))^\top\mathbf{1}_N$,
where $\mathbf{1}_N\in\Rbb^N$ denotes the vector with all entries equal to $1$.

Then, the discrete Hamiltonian resulting from the discretization of \eqref{eq:Hamiltonian_cont} reads
\begin{equation*}
    \Hcal(f_h)=
    \dfrac{\ell_x}{2N}\sum_{\ell=1}^N\vel_\ell^2 + \dfrac12\mathbf{\Phi}(\posbf)^\top\stmat\mathbf{\Phi}(\posbf).
\end{equation*}
Using the discretization of the Poisson equation and re-normalizing
with respect to the multiplicative constant $\ell_x N^{-1}$, the discrete Hamiltonian can be defined as a function of the unknown particles' positions and velocities as
\begin{equation}\label{eq:Hamiltonian}
    \ham(\posbf,\velbf)
    =\dfrac12\velbf^\top \velbf+h(\posbf)
    =\dfrac12\velbf^\top \velbf+\dfrac{N}{2\ell_x}\decd(\posbf)^\top \stmat^{-1}\decd(\posbf),
\end{equation}
where $\nlham$ is the discrete electric potential energy
and corresponds to the non-quadratic part of the Hamiltonian.
Notice that the Hamiltonian associated with the Vlasov-Poisson problem is separable, that is, the kinetic energy only depends on the particles' velocities, while $\nlham$ only depends on the particles' positions.
Let us introduce the matrix-valued function $\nabla\Lambda:\xbf\in\Rbb^N\mapsto \nabla\Lambda(\xbf)\in\Rbb^{N\times\kappa}$, defined as $\nabla\Lambda(\xbf)_{\ell}^{j}:=\lambda_j^\prime(\pos_{\ell})$. The gradient of the nonlinear part of the Hamiltonian with respect to the particles' positions is associated with the discrete electric field, and it is given by
\begin{equation}\label{eq:nonlin_ham_FOM}
    \nabla_{\posbf}\ham(\posbf,\velbf) = \nabla_{\posbf}h(\posbf)=-\nabla\Lambda(\posbf)\stmat^{-1}\decd(\posbf).
\end{equation}
Finally, the semi-discrete system in Hamiltonian form reads
\begin{equation}\label{eq:semidisc}
    \begin{bmatrix}
    \dpos(t,\prm) \\
    \dvel(t,\prm)
    \end{bmatrix}=J_{2N}
    \begin{bmatrix}
    -\nabla\Lambda(\posbf(t,\prm))\stmat^{-1}\decd(\posbf(t,\prm)) \\
    \velbf(t,\prm)\end{bmatrix}
    =\begin{bmatrix}
    \velbf(t,\prm) \\
    -\nabla_{\posbf}\nlham(\posbf(t,\prm))
    \end{bmatrix},
\end{equation}
where $J_{2N}\in\Rbb^{2N\times 2N}$ is the canonical symplectic tensor defined as
\begin{equation*}
    J_{2N}=\begin{bmatrix}
        0_N & I_N \\ -I_N & 0_N
    \end{bmatrix}\in\Rbb^{2N\times 2N},
\end{equation*}
with $I_N$, $0_N\in\Rbb^{N\times N}$ denoting the identity and the zero matrix of dimension $N$, respectively.

Suppose we are interested in solving the semi-discrete problem \eqref{eq:semidisc} for $p$ test parameters $\{\prm_1,\dots,\prm_p\}\in\prms$. This is a case of interest in multi-query contexts such as uncertainty quantification or optimal experimental design. To this end we introduce the matrix-valued function $\fX(t)\in\Rbb^{N\times p}$, whose $(\ell,s)$-entry denotes the position of the $\ell$th particle associated to the $s$th parameter, that is
$\fX_{\ell}^s(t)=\pos_{\ell}(t,\prm_s)$
for any $1\leq\ell\leq N$ and $1\leq s\leq p$.
Similarly, for the velocity variables, $\fV(t)\in\Rbb^{N\times p}$ is defined as
$\fV_{\ell}^s(t)=\vel_{\ell}(t,\prm_s)$.
Problem \eqref{eq:semidisc} then becomes: given $\fos(0)\in\Rbb^{2N\times p}$ find $\fos(t)\in\Rbb^{2N\times p}$ such that
\begin{equation}\label{eq:FOM}
    \dfos(t) := \begin{bmatrix}
        \dfX(t) \\ \dfV(t)
    \end{bmatrix} = \begin{bmatrix}
        \fV(t) \\ -\mdiscref(\fX(t))
    \end{bmatrix}
\end{equation}
where $\mdiscref(\fX(t))\in\Rbb^{N\times p}$ is defined as
$\mdiscref(\fX(t))_{\ell}^s
=\partial_{\pos_{\ell}}h(\posbf(t,\prm_s))$
for any $1\leq\ell\leq N$ and $1\leq s\leq p$.
In the following, we refer to \eqref{eq:FOM} as the full order model (FOM).

\subsection{Numerical time integration}\label{sec:FOMtime}

In this work, the full order model \eqref{eq:FOM} is solved in time by means of the St\"ormer-Verlet scheme. In addition to being a symplectic integrator \cite[Theorem 3.4]{HLW10}, the St\"ormer-Verlet scheme is explicit for separable Hamiltonian systems. This choice allows to circumvent the need for a nonlinear solver at each time step, which might lead to a considerable computational effort in the presence of large-scale systems to be simulated for many parameters. Introducing the uniform grid in time $0=t^{0}<t^{1}<\dots<t^{\nt}=T$, with $t^{\tsind}=\tsind\Delta t$ for $\tsind=0,\dots,\nt$ and $\Delta t = T/\nt$, and defining $\fX^{(\tsind)}$ and $\fV^{(\tsind)}$ as approximations of $\fX(t^\tsind)$ and $\fV(t^\tsind)$, respectively, the numerical time integration of \eqref{eq:FOM} in the temporal subinterval $(t^{\tsind},t^{\tsind+1}]$, for any $\tsind=1,\dots,\nt$, reads
\begin{align}
\begin{split}\label{eq:SV-full}
    &\fX^{(\tsind)}=\fX^{(\tsind-1)}+\Delta t\left(\fV^{(\tsind-1)}-\dfrac{\Delta t}{2}\mdiscref(\fX^{(\tsind-1)})\right), \\
    & \fV^{(\tsind)} = \fV^{(\tsind-1)}-\dfrac{\Delta t}{2}\bigg(\mdiscref(\fX^{(\tsind-1)})+\mdiscref(\fX^{(\tsind)})\bigg),
\end{split}
\end{align}
with given initial conditions $\fX^{(0)}$ and $\fV^{(0)}$.

The computational cost of solving \eqref{eq:SV-full} for one time step is $O(Np\pdeg)$. While the update of the particles' positions and velocities requires $O(Np)$ operations, the bulk of the computational cost is required by the evaluation of the discrete electric field $E(\fX)$.
Indeed, for each test parameter $\prm_s$, with $1\leq s\leq p$, one has to evaluate the quantity
$-\nabla\Lambda(\fXbf^s)\stmat^{-1}\decd(\fXbf^s)$.
This requires to evaluate the particle-to-grid map by associating to each macro-particle the mesh element containing it, with complexity $O(N)$ for each instance of the parameter. Then, assuming that the number of operations required to evaluate the basis functions and their derivatives is constant, the matrices $\Lambda(\fXbf^s)$ and $\nabla\Lambda(\fXbf^s)$ are assembled in $O(N\pdeg)$ operations: in fact, every particle is contained in the support of at most $\pdeg+1$ basis functions. 
A similar cost is needed for the computation of the discrete electric charge density $\decd(\fXbf^s)$.

\section{Symplectic dynamical low-rank approximation}\label{sec:ROM}

Due to their slow convergence rate, particle-based methods require a large number of computational macro-particles to achieve accurate approximations. This entails that the computational cost of solving \eqref{eq:SV-full} can become prohibitive, especially in the presence of a large number $p$ of test parameters. To alleviate this computational burden, projection-based model order reduction \cite{BGW15} aims at representing the full order dynamics in a low-dimensional approximation space.

Given $n\in\mathbb{N}$, with $n\leq N$ and $n\leq p$, the full order solution $\fos(t)\in\Rbb^{2N\times p}$ is approximated by $A\coef(t)$ where the columns of the reduced basis matrix $A\in\Rbb^{2N\times 2n}$ span the $2n$-dimensional, so-called, reduced basis space and $\coef\in\Rbb^{2n\times p}$ contains the time-dependent expansion coefficients for all test parameters.
In order to preserve the Hamiltonian structure of the original problem, the reduced basis matrix $A$ is required to be orthosymplectic, that is, to satisfy $A^\top A=I_{2n}$ and $A^\top J_{2N}A=J_{2n}$ \cite{PM16}. For the sake of computational efficiency, it is desirable that the dimension of the reduced space is much smaller than the full order dimension $N$. On the other hand, this is only possible if the Kolmogorov $n$-width of the solution set associated to the FOM \cite{Pin85} decays sufficiently fast with $n$.
In this scenario, a small approximation space is sufficient to yield an accurate representation of the full order dynamics for all parameters and at all times. The conservative nature of Hamiltonian system, however, results in unfavorable \emph{global} reducibility properties, meaning that large reduced spaces are required to achieve even moderate accuracy. If the problem is \emph{locally} reducible, this issue can be addressed by evolving the reduced space over time.
This has led to the development of nonlinear model order reduction techniques where the reduced space, and, possibly, its dimension, evolve over time, see e.g. \cite{EL19,EiL18,EJ21,UZ24} in the context of kinetic plasma models.
In this work we follow the approach proposed in \cite{Pag21}, and consider an approximation of the full order solution $\fos(t)\in\Rbb^{2N\times p}$ of the form $A(t)\coef(t)$ where the reduced basis $A$ is time-dependent.

\textbf{The local reduced basis space.}
To satisfy the orthosymplecticity constraint, the reduced basis matrix $A(t)$ must be of the form
\begin{equation*}
    A(t) = \begin{bmatrix}
        \redb(t) & -\widehat{\redb}(t) \\ \widehat{\redb}(t) & \redb(t)
    \end{bmatrix}
\end{equation*}
with $\redb(t),\widehat{\redb}(t)\in\Rbb^{N\times n}$ satisfying $\redb^\top(t)\redb(t)+\widehat{\redb}(t)^\top\widehat{\redb}(t) = I_{n}$ and $\redb^\top(t)\widehat{\redb}(t)+\widehat{\redb}^\top(t)\redb(t) = 0_n$ at each time \cite[Lemma 4.4]{PM16}. Moreover, if $\widehat{\redb}\neq0$, the reduced order model does not preserve the separability of the full order Hamiltonian. This precludes the possibility of employing an explicit time integrator for the reduced system, which might lead to suboptimal performances in terms of computational time. For this reason, in this work, we impose that $A$ possesses a block-diagonal structure by setting $\widehat{\redb}(t)=0$. More precisely, given an orthogonal matrix $\redb(t)\in\Rbb^{N\times n}$ at a fixed time $t\in\Tcal$, we approximate the particles' positions $\fX(t)$ and velocities $\fV(t)$ with $\rX(t)=\redb(t)\cX(t)$ and $\rV(t)=\redb(t)\cV(t)$, respectively, where $\cX(t),\cV(t)\in\Rbb^{n\times p}$. 
This can be seen as approximating $\fos(t)$ at any given time $t$ in the set
\begin{equation*}
    \mathbb{S}_n:=\left\{\ros=\begin{bmatrix}\rX \\ \rV\end{bmatrix}\in\Rbb^{2N\times p} : \rX=\redb\cX,\, \rV=\redb\cV \text{ with } \redb\in\stm \text{ and } \text{rank}(\cX\cX^\top + \cV\cV^\top)=n\right\}
\end{equation*}
where $\stm=\{\redb\in\Rbb^{N\times n}: \redb^\top\redb = I_n\}$ is the Stiefel manifold. It can be shown that the rank condition $\text{rank}(\cX\cX^\top + \cV\cV^\top)=n$ on the coefficients
endows $\mathbb{S}_n$ with a manifold structure.

\textbf{The reduced order model.}
The coefficient matrices $\cX,\cV\in\Rbb^{n\times p}$ have columns defined as
$\cXbf^s(t) := \rposbf(t,\prm_s)$ and $\cVbf^s(t) := \rvelbf(t,\prm_s)$
for any $1\leq s\leq p$.
Similarly to \cite{Pag21}, following a dynamical low-rank approximation approach \cite{KL07}, evolution equations for the reduced basis and the coefficients can be obtained by projecting \eqref{eq:FOM} onto the tangent space at $\ros$ to the approximation manifold $\mathbb{S}_n$ via a symplectic projection, as described in details in \cite[Section 4.1]{Pag21}.
This results in the following evolution equations for the basis $\redb$ and for the coefficients $\cX$ and $\cV$ associated with the particles' positions and velocities, respectively:
\begin{subequations}\label{eq:rom}
\begin{empheq}[left=\empheqlbrace]{align}
   & \dcX(t) = \cV(t) \label{eq:ZXdot} \\
   & \dcV(t) = -\redb(t)^\top\mdiscref\big(\redb(t)\cX(t)\big) \label{eq:ZVdot}
   \\ 
    & \Dot{\redb}(t)=\big(\redb(t)\redb(t)^\top-I_N\big)\mdiscref\big(\redb(t) \cX(t)\big)\cV(t)^\top M^{-1}\big(\cX(t),\cV(t)\big),
   \label{eq:Phidot}
\end{empheq}
\end{subequations}
where $M(\cX,\cV):=\cX\cX^\top + \cV\cV^\top$.
We will refer to the approximate system above as the reduced order model (ROM). Note that the approximate dynamics for the coefficients \eqref{eq:ZXdot}-\eqref{eq:ZVdot} is still Hamiltonian, with the reduced Hamiltonian $\rham$ given by
\begin{equation}\label{eq:red_Ham}
    \rham(\rposbf,\rvelbf;\redb) := \ham(\redb\rposbf,\redb\rvelbf)
    = \dfrac12\rvelbf^\top \rvelbf + \nlham(\redb\rposbf) \qquad \forall \rposbf,\rvelbf\in\Rbb^n.
\end{equation}
Note that the separability of the Hamiltonian is preserved. 
Moreover, the nonlinear part of the reduced Hamiltonian reads, for any $\rposbf\in\Rbb^n$,
\begin{equation}\label{eq:nonlinear_Ham}
    \nlrham(\rposbf;\redb) = \nlham(\redb\rposbf)
    = \dfrac{N}{2\ell_x}\decd(\redb\rposbf)^\top\stmat^{-1}\decd(\redb\rposbf)
    =\dfrac{N}{2\ell_x}\sum_{i,j=1}^\fedim\decdi_i(\redb\rposbf)^\top\stmat^{-1}_{ij}\decdi_j(\redb \rposbf)
\end{equation}
where $\decdi_j$, for any $1\leq j\leq \kappa$, is defined as in \eqref{eq:gj}.
The nonlinear Hamiltonian corresponds to the discrete potential energy associated with the approximate solution. 



\subsection{Partitioned Runge-Kutta scheme for the reduced dynamics}
\label{sec:pRK}
For the numerical time integration of the reduced order model we couple a St\"ormer-Verlet time integration scheme for the Hamiltonian system \eqref{eq:ZXdot}-\eqref{eq:ZVdot}
with a tangent method \cite{CO02,Pag21} for the numerical integration of the evolution equation \eqref{eq:Phidot} for the reduced basis.
Using a tangent method for \eqref{eq:Phidot} ensures that the time approximation of the basis $\redb$ remains orthogonal and that the computational cost associated with its discrete evolution remains linear in $N$.

First we recall from \cite{Pag21} how to discretize the time evolution of the reduced basis $\Psi$ and then we formulate a numerical method for the solution of the coupled system \eqref{eq:rom}.
More in detail, given an approximation $\redb^{(\tsind)}\in\stm$ to $\redb(t^{\tsind})$, it is possible to represent the reduced basis matrix $\redb$ at the generic time $t$ in the subinterval $(t^{\tsind}, t^{\tsind+1}]$ as the image of some matrix $\prpsi$ in the tangent space $T_{\redb^{(\tsind)}}\stm$ through the local retraction $\retr_{\redb^{(\tsind)}}:T_{\redb^{(\tsind)}}\stm\to\stm$, defined as
\begin{equation}\label{eq:retraction}
    \retr_{\redb^{(\tsind)}}(\prpsi):=\mathrm{cay}\big(\zeta_{\redb^{(\tsind)}}(\prpsi)(\redb^{(\tsind)})^{\top}-\redb^{(\tsind)} \zeta_{\redb^{(\tsind)}}(\prpsi)^{\top}\big)\redb^{(\tsind)}
\end{equation}
where $\zeta_{\redb^{(\tsind)}}(\prpsi):=(I-\redb^{(\tsind)}(\redb^{(\tsind)})^\top/2)\prpsi$ and $\mathrm{cay}$ denotes the Cayley transform \cite[Section IV.8.3]{HLW10}. Using the retraction \eqref{eq:retraction} and the evolution equation for $\redb$ in \eqref{eq:Phidot}, it is possible to write the following equation on the tangent space $T_{\redb^{(\tsind)}}\stm$: given $\prpsi(t^\tsind)$, find $\prpsi(t)$ such that
\begin{equation*}
    \dot{\prpsi}(t) = \big({d\retr_{\redb^{(\tsind)}}}[\prpsi(t)]\big)^{-1}\mathcal{L}\big(\cX(t),\cV(t),\retr_{\redb^{(\tsind)}}(\prpsi(t))\big)M^{-1}\big(\cX(t),\cV(t)\big) \qquad \forall t\in(t^\tsind,t^{\tsind+1}],
\end{equation*}
where $\mathcal{L}(\cX,\cV,\redb):=(\redb\redb^\top-I_N)\mdiscref(\redb \cX)\cV^\top$ and
${d\retr_{\redb^{(\tsind)}}}[\prpsi]:T_{\redb^{(\tsind)}}\stm\to T_{\retr_{\redb^{(\tsind)}}(\prpsi)}\stm$ is the tangent map of the retraction \cite[Section 5.3.1]{Pag21}. This approach ensures that the orthogonality constraint on $\redb(t)$ is satisfied at each time.
Therefore, the reduced order model \eqref{eq:rom} yields the following system of evolution equations in the time interval $(t^{\tsind},t^{\tsind+1}]$: given $\cX^{(\tsind)},\cV^{(\tsind)}$ and $\redb^{(\tsind)}$, solve
\begin{equation}\label{eq:evoleq}
    \left\{\begin{aligned}
& \dcX = \cV =: f_1(\cV)\\
& \dcV = -\retr_{\redb^{(\tsind)}}(\prpsi)^\top \mdiscref(\retr_{\redb^{(\tsind)}}(\prpsi)\cX) =: f_2(\cX,\prpsi)\\
& \Dot{\prpsi} = \big({d\retr_{\redb^{(\tsind)}}}[\prpsi]\big)^{-1}\mathcal{L}\big(\cX,\cV,\retr_{\redb^{(\tsind)}}(\prpsi)\big)M^{-1}(\cX,\cV)=:f_3(\cX,\cV,\prpsi)
    \end{aligned},\right.
\end{equation}
with $\prpsi^{(\tsind)}=0$ since $\retr_{\redb^{(\tsind)}}(0) = \redb^{(\tsind)}$.
Note that the retraction and its inverse tangent map can be evaluated with $O(Nn^2)$ operations as shown in \cite[Section 5.3.1]{Pag21}.

\textbf{A second order partitioned Runge-Kutta (RK) scheme.}
We combine an explicit time integrator for the evolution of $\prpsi$ with a symplectic integrator for the coefficient matrices, so that the geometric structure of the phase space of the reduced system is preserved at the time-discrete level.
We describe the method for a generic autonomous system of coupled differential equations
\begin{equation*}
    \begin{cases}
        \Dot{x}_1(t)=f_1(x_1(t),x_2(t),x_3(t)) \\
        \Dot{x}_2(t)=f_2(x_1(t),x_2(t),x_3(t)) \\
        \Dot{x}_3(t)=f_3(x_1(t),x_2(t),x_3(t))
    \end{cases}
\end{equation*}
with initial conditions $x_l(0)=x_l^{(0)}$, for $l\in\{1,2,3\}$. Note that \eqref{eq:evoleq} is a particular case obtained by identifying $x_1$, $x_2$ and $x_3$ with $\cX$, $\cV$ and $\prpsi$, respectively.  Given a time step $\Delta t$, we consider the following $\ns$-stage partitioned RK scheme:
for any $l\in\{1,2,3\}$ the numerical solution at time $t^1=t^0+\Delta t$ is given by
\begin{align}\begin{split}\label{eq:PRK3}
    &k^{[l]}_{i} = f_{l}\Big(x_1^{(0)}+\Delta t\sum_{j=1}^{\ns}a^{[1]}_{ij}k^{[1]}_j, x_2^{(0)}+\Delta t\sum_{j=1}^{\ns} a^{[2]}_{ij}k^{[2]}_j, x_3^{(0)}+\Delta t\sum_{j=1}^{\ns} a^{[3]}_{ij}k^{[3]}_j\Big), \quad i=1,\dots,\ns, \\ 
    &x_l^{(1)}=x_l^{(0)}+\Delta t\sum_{i=1}^{\ns}b_i^{[l]}k^{[l]}_i.
    \end{split}
\end{align}
If, for all $l\in\{1,2,3\}$, $(\{a^{[l]}_{ij}\}_{i,j=1}^{\ns},\{b_i^{[l]}\}_{i=1}^{\ns})$ are the coefficients of RK methods of order $2$, then \eqref{eq:PRK3} is a partitioned RK method of order $2$ if and only if
\begin{equation*}
    \sum_{i=1}^{\ns}b_i^{[l]}\sum_{j=1}^{\ns} a^{[r]}_{ij}
    = \dfrac12,\quad\forall\,l,r\in\{1,2,3\},\,l\neq r.
\end{equation*}
We recall that the aforementioned RK methods are of order $2$ if the following conditions are satisfied (see e.g. \cite[Section II.1.1]{HLW10}):
\begin{equation*}
    \sum_{i=1}^{\ns}b^{[l]}_i = 1,
    \qquad \sum_{i=1}^{\ns}b^{[l]}_i\sum_{j=1}^{\ns} a^{[l]}_{ij} =\frac12,\qquad\forall\, l\in\{1,2,3\}.
\end{equation*}
One possibility to obtain a two-stage, second order \emph{explicit} partitioned RK method for \eqref{eq:evoleq} is to combine the St\"ormer-Verlet scheme with the Heun method. This corresponds to the following choice of coefficients:
\begin{align*}
    &a_{21}^{[1]}=a^{[1]}_{22}=\frac12, \quad a^{[1]}_{11}=a^{[1]}_{12}=0, \quad b^{[1]}_1=b^{[1]}_2=\frac12, \\
    &a^{[2]}_{11}=a^{[2]}_{21}=\frac12, \quad a^{[2]}_{12}=a^{[2]}_{22}=0, \quad b^{[2]}_1=b^{[2]}_2=\frac12, \\
    &a^{[3]}_{21}=1, \quad a^{[3]}_{11}=a^{[3]}_{12}=a^{[3]}_{22}=0, \quad b^{[3]}_1=b^{[3]}_2=\frac12.
\end{align*}

The arithmetic complexity of solving the reduced model \eqref{eq:evoleq} with the proposed time integrator is $O(Npn)+O(Np\pdeg)$.
The major computational bottleneck is once more the
evaluation of the electric field.
Although the quantity $\redb^{\top} E(\redb\cX)$ has dimension $n \times p$, we still need to reconstruct the approximate particles' positions $\rX = \redb\cX$ at a cost of $O(Npn)$, and then evaluate the electric field in the high-dimensional space at a cost of $O(Np\pdeg)$, as explained in \Cref{sec:FOMtime}.

The approach we have presented in this section differs substantially from classical model order reduction techniques in that it does not rely on the so-called offline/online decomposition. Unlike standard MOR methods where the reduced space is constructed in the offline phase from snapshots generated by simulations of the full order model, in our setting the reduced basis is initialized from the initial condition and it is then evolved over time. This makes our approach completely independent of the full order model, and, in particular, it circumvents the need for a potentially expensive offline phase. 
Moreover, relying on one reduced basis space, as in traditional MOR, is not an option here since the transport nature of the Vlasov-Poisson problem makes it simply not reducible by a linear approximation space.

Dynamically updating the reduced basis space has a computational complexity that inevitably depends on the full order dimension $N$, although only \emph{linearly}.
Moreover, since one reduced basis is constructed for all $p$ test parameters, the complexity of this evolution will also scale with $p$. Therefore, in general, the dependence on $N$ and $p$ of the overall computational cost cannot be overcome. This entails that, despite dimensionality reduction, solving the reduced Vlasov-Poisson model for one time step is as computationally expensive as solving the full order model \eqref{eq:SV-full}. A significant improvement of computational efficiency can be achieved by decoupling the operations that depend on $N$ from those that depend on $p$. This will be the subject of \Cref{sec:hyper-reduction}.

\section{Rank adaptivity}\label{sec:rank-adaptive}
The dimension of the reduced model plays an important role in the context of model order reduction and dynamical approximation. On the one hand, a too small reduced space might yield a poor approximation of the full order solution, on the other hand, a large value might spoil the computational efficiency of the method and lead to overapproximation \cite[Section 5.3]{KL07}. Indeed, a necessary (but not sufficient) condition for the matrix $\cX\cX^\top+\cV\cV^\top$ in \eqref{eq:Phidot} to be full rank is that $n\leq p$, and violating this condition results in a rank-deficient evolution problem for the reduced basis. In order to address this issue, we propose an algorithm where the dimension of the reduced space is adapted over time. To this end, we apply to the Vlasov-Poisson problem the error indicator proposed in \cite{HPR23}, based on the linearized residual of the full order model. This error indicator is then used to determine when and how to modify the rank of the approximated solution. While the computation of the error indicator of \cite{HPR23} requires to solve a linear system of size $2N\times2N$ in the general case, we show that the particular structure of the Vlasov-Poisson problem allows for a much cheaper implementation.

Let us introduce the discrete residual associated with the St\"ormer-Verlet time integration scheme, that is
\begin{equation}\label{eq:full_residual}
    \resid_{\fom}^{(\tsind)} = \resid(\fos^{(\tsind)},\fos^{(\tsind-1)}) := \begin{bmatrix}
        \fX^{(\tsind)}-\fX^{(\tsind-1)}-\Delta t\left(\fV^{(\tsind-1)}-\dfrac{\Delta t}{2}\mdiscref(\fX^{(\tsind-1)})\right) \\ \fV^{(\tsind)}-\fV^{(\tsind-1)}+\dfrac{\Delta t}{2}\left(\mdiscref(\fX^{(\tsind-1)})+\mdiscref(\fX^{(\tsind)})\right)
    \end{bmatrix}.
\end{equation}
Let us first assume that $p=1$. The Taylor expansion of the residual map \eqref{eq:full_residual} around $(\ros^{(\tsind)},\ros^{(\tsind-1)})$ truncated at the first order reads, for all $\tsind$,
\begin{equation*}
    \resid_{\fom}^{(\tsind)}\approx\resid_{\rom}^{(\tsind)}+\frac{\partial\resid_{\fom}^{(\tsind)}}{\partial\fos^{(\tsind)}}\bigg|_{(\ros^{(\tsind)},\ros^{(\tsind-1)})}(\fos^{(\tsind)}-\ros^{(\tsind)})+\frac{\partial\resid_{\fom}^{(\tsind)}}{\partial\fos^{(\tsind-1)}}\bigg|_{(\ros^{(\tsind)},\ros^{(\tsind-1)})}(\fos^{(\tsind-1)}-\ros^{(\tsind-1)}),
\end{equation*}
where $\resid_{\rom}^{(\tsind)}=\resid(\ros^{(\tsind)},\ros^{(\tsind-1)})$ and 
\begin{equation}\label{eq:inv}
    \left(\dfrac{\partial\resid_{\fom}^{(\tsind)}}{\partial\fos^{(\tsind)}}\bigg|_{(\ros^{(\tsind)},\ros^{(\tsind-1)})}\right)^{-1}
    = \begin{bmatrix}
        I_N & 0_N \\
        -\dfrac{\Delta t}{2}\jacE(\rX^{(\tsind)}) & I_N
    \end{bmatrix},
\end{equation}
\begin{equation}\label{eq:inv2}
    \dfrac{\partial\resid_{\fom}^{(\tsind)}}{\partial\fos^{(\tsind-1)}}\bigg|_{(\ros^{(\tsind)},\ros^{(\tsind-1)})}=\begin{bmatrix}
        -I_N + \dfrac{\Delta t^2}{2}\jacE(\rX^{(\tsind-1)}) & -\Delta tI_N \\ \dfrac{\Delta t}{2}\jacE(\rX^{(\tsind-1)}) & -I_N
    \end{bmatrix}.
\end{equation}
Here $\jacE$ is the Jacobian matrix of the map $\xbf\in\Rbb^{N}\mapsto \mdiscref(\xbf)$ and it can be derived from the expression of the gradient of the nonlinear Hamiltonian part \eqref{eq:nonlin_ham_FOM}, that is
\begin{equation}\label{eq:jacobian_E}
    \jacE(\xbf) = -\text{diag}(\nabla^2\Lambda(\xbf)\stmat^{-1}{\decd}(\xbf))+\ell_xN^{-1}\nabla\Lambda(\xbf)\stmat^{-1}\nabla\Lambda(\xbf)^\top\in\Rbb^{N\times N},
\end{equation}
where $\text{diag}(\abf)$ denotes the diagonal matrix with diagonal elements given by the vector $\abf\in\Rbb^{N}$,
and $\nabla^2\Lambda(\xbf)\in\Rbb^{N\times\kappa}$ is defined as $\nabla^2\Lambda(\xbf)_{\ell}^i:=\lambda_i^{\prime\prime}(\xbf_{\ell})$, for $1\leq\ell\leq N$ and $1\leq i\leq \kappa$.
Then, the difference $\fos^{(\tsind)}-\ros^{(\tsind)}$ between the time-discrete solution of the full order model \eqref{eq:SV-full} and that of the reduced order model \eqref{eq:evoleq} at time $t^{\tsind}$ can be approximated by
\begin{equation}\label{eq:diff_approx}
    \errappr^{(\tsind)}:=-\left(\frac{\partial\resid_{\fom}^{(\tsind)}}{\partial\fos^{(\tsind)}}\bigg|_{(\ros^{(\tsind)},\ros^{(\tsind-1)})}\right)^{-1}\left(\resid_{\rom}^{(\tsind)}+\frac{\partial\resid_{\fom}^{(\tsind)}}{\partial\fos^{(\tsind-1)}}\bigg|_{(\ros^{(\tsind)},\ros^{(\tsind-1)})}
    \errappr^{(\tsind-1)}
    \right),
\end{equation}
where $\errappr^{(0)}=\fos^{(0)}-\ros^{(0)}$. 
Owing to \eqref{eq:inv},
the computation of $\errappr^{(\tsind)}$ does not require in practice the solution of a $2N\times2N$ linear system.
Moreover, thanks to the structure of $\jacE$, its product with a generic vector can be implemented at a linear arithmetic complexity in $N$. In particular, for generic $\xbf,\ybf\in\Rbb^N$,
since $\nabla\Lambda(\xbf)$ has at most $\pdeg+1$ non-zero elements in each row, the product $\nabla\Lambda(\xbf)^\top \ybf$ is computed in $O(N\pdeg)$ operations.

The procedure discussed above can be repeated for any parameter $\prm_s$, for $s=1,\dots,p$. Then the matrix $\merrappr^{(\tsind)}\in\Rbb^{2N\times p}$, whose $s$th column is given by the quantity $\errappr^{(\tsind)}$ associated with the $s$th parameter $\prm_s$,
is an approximation of the difference $\fos^{(\tsind)}-\ros^{(\tsind)}\in\Rbb^{2N\times p}$. Therefore, we propose to compute the error indicator as an approximation of the relative error, in the Frobenius norm, between the full order solution and the reduced order solution at time $t^{\tsind}$, that is
\begin{equation}\label{eq:Errind}
    \errind^{(\tsind)} := \dfrac{\normF{\merrappr^{(\tsind)}}}{\normF{\ros^{(\tsind)}+\merrappr^{(\tsind)}}}.
\end{equation}

Since the cost to compute the error indicator depends on the product of the number of particles $N$ and the number of test parameters $p$, we propose to only evaluate the approximated residual associated to $\spei$ sample parameters $\{\prm_{s_1},\dots,\prm_{s_{\spei}}\}$.
The error indicator $\errind^{(\tsind)}_{\star}$ is then obtained as in \eqref{eq:Errind} from the quantities $\rostar^{(\tsind)}\in\Rbb^{2N\times\spei}$ and
$\merrappr^{(\tsind)}_{\star}\in\Rbb^{2N\times \spei}$.
The set of $\spei$ sample parameters is constructed at the initial time and it is fixed throughout the simulation. In this work, we consider $\spei$ randomly selected sample parameters for simplicity. 

This procedure is summarized in \Cref{algo:RA-EI}.
\begin{algorithm}[H]
\caption{Computation of error indicator for rank adaptivity}\label{algo:RA-EI}
\normalsize
\begin{algorithmic}[1]
{\small
\Procedure{$(\errind_{\star}^{(\tsind)}, \merrappr_{\star}^{(\tsind)}, \mdiscref_{\star}^{(\tsind)}, \rostar^{(\tsind)})$=RA-EI}{$\merrappr_{\star}^{(\tsind-1)}$, $\mdiscref_{\star}^{(\tsind-1)}$, $\rostar^{(\tsind-1)}$, $\redb^{(\tsind)}$, $\cX^{(\tsind)}$, $\cV^{(\tsind)}$}
 \For{$j=1,\dots,\spei$}
 \State Reconstruct the approximate solution $\ros^{(\tsind)}(\prm_{s_j})$ associated to the $j$th sample parameter
 \State Compute the residual $\resid_{\rom}^{(\tsind)}(\prm_{s_j})=\resid(\ros^{(\tsind)}(\prm_{s_j}),\ros^{(\tsind-1)}(\prm_{s_j}))$ using \eqref{eq:full_residual}
 \State Compute the error approximation $\errappr^{(\tsind)}(\prm_{s_j})$ as in \eqref{eq:diff_approx}
 \State Store $\ros^{(\tsind)}(\prm_{s_j})$, $\mdiscref(\rX^{(\tsind)}(\prm_{s_j}))$ and $\errappr^{(\tsind)}(\prm_{s_j})$ as columns of the matrices $\rostar^{(\tsind)}$, $\mdiscref_{\star}^{(\tsind)}$ and $\merrappr_{\star}^{(\tsind)}$, respectively
 \EndFor
 \State Compute the error indicator $\errind^{(\tsind)}_{\star}=\displaystyle\frac{\normF{\merrappr_{\star}^{(\tsind)}}}{\normF{\rostar^{(\tsind)}+\merrappr_{\star}^{(\tsind)}}}$
 \EndProcedure}
\end{algorithmic}
\end{algorithm}
\noindent
The computational complexity of \Cref{algo:RA-EI} is $O(Nn\spei)+O(N\pdeg\spei)$.

For a given instance of the sample parameter, the approximated solution is first reconstructed at line 3 by multiplying the reduced basis matrix with the corresponding column of the coefficient matrix with complexity $O(Nn)$. 
At line 4, the computation of the residual requires the knowledge of the electric field $\mdiscref(\rX^{(\tsind)}(\prm_{s_j}))$: this has already been computed in the numerical solution of system \eqref{eq:evoleq}, so that the computation of $\resid_{\rom}^{(\tsind)}(\prm_{s_j})$ only involves sums of vectors, whose computational complexity is $O(N)$. 
Next, the error approximation $\errappr^{(\tsind)}(\prm_{s_j})$ is computed at line 5. The quantity $\dfrac{\partial\resid_{\fom}^{(\tsind)}}{\partial\fos^{(\tsind-1)}}\bigg|_{(\ros^{(\tsind)},\ros^{(\tsind-1)})}\errappr^{(\tsind-1)}$ appearing in \eqref{eq:diff_approx} is available from the previous time step owing to \eqref{eq:inv2}. Moreover, the matrices $\Lambda(\rX^{(\tsind)}(\prm_{s_j}))$ and $\nabla\Lambda(\rX^{(\tsind)}(\prm_{s_j}))$ required for the evaluation of the Jacobian $\jacE(\rX^{(\tsind)}(\prm_{s_j}))$, as prescribed by \eqref{eq:jacobian_E}, are already available from the computation of $\mdiscref$ in the solution of \eqref{eq:evoleq}. The only additional operations to be performed here are the construction of $\nabla^2\Lambda(\rX^{(\tsind)}(\prm_{s_j}))$, which has complexity $O(N\pdeg)$, and the multiplication of $\jacE(\rX^{(\tsind)}(\prm_{s_j}))$ with an $N$-dimensional vector, which requires $O(N\pdeg)$ operations as discussed above. 
The arithmetic complexity of the loop at lines 2--7 is therefore $O(Nn\spei)+O(N\pdeg\spei)$. Finally, the computation of the Frobenius norms at line 8 requires $O(N\spei)$ operations.

The error indicator in \eqref{eq:Errind} is used in the rank-adaptive approach to decide when the current approximation is no longer sufficiently accurate. If the error indicator becomes ``too large", then the reduced space is augmented by adding one column to the reduced basis matrix $\redb$. Following \cite{HPR23}, the rank update is performed if the error indicator satisfies
\begin{equation}\label{eq:update_criterion}
    \errind_\star^{(\tsind)} \geq \cone\ctwo^\mu\overline{\errind}_\star,
\end{equation}
where $\cone,\ctwo\in\Rbb$ are fixed parameters, $\overline{\errind}_\star$ is the value of the error indicator at the previous update and $\mu$ is the number of rank updates performed until time $t^\tsind$. The role of $\mu$ is to limit the frequency of the rank updates over time. As shown in \cite{HPR23}, this criterion is reliable and robust, with little sensitivity with respect to the values of $\cone$ and $\ctwo$. If the criterion is satisfied, then the rank update is performed and the new basis vector is defined as the singular vector of the matrix
$(I_{N}-\redb\redb^\top)\merrappr_{\star}^{(\tsind)}\in\Rbb^{N\times2\spei}$
corresponding to the largest singular value. In this way, the reduced space is augmented with the direction that is worst approximated at the time of the update. 

\subsection{Update of the coefficients of the reduced solution}\label{sec:coeff_upd}

Assuming that the reduced basis matrix $\redb^{\old}\in\Rbb^{N\times n}$ has been updated to $\redb^{\new}\in\Rbb^{N\times (n+1)}$ at a generic time $t^\tsind>0$ as described in the previous section, we now focus on the update of the coefficient matrices $\cX^{\old},\cV^{\old}\in\Rbb^{n\times p}$ to $\cX^{\new},\cV^{\new}\in\Rbb^{(n+1)\times p}$. We consider two different procedures in this work. In the first approach, the coefficient matrices are augmented with two rows of zeros, as proposed in \cite{HPR23}. 
This is equivalent to imposing that the adapted reduced solution $\ros^{\new}$ is the projection of $\ros^{\old}$, the reduced solution before rank adaptation, onto the updated reduced space, and it ensures that $\ros^{\old}=\ros^{\new}$. In order to motivate the second approach, we first observe that, ideally, one might want to set $\cX^{\new}=(\redb^{\new})^\top\fX^{(\tsind)}$ and $\cV^{\new}=(\redb^{\new})^\top\fV^{(\tsind)}$, where $\fos^{(\tsind)}=\begin{bmatrix}
    \fX^{(\tsind)} \\ \fV^{(\tsind)}
\end{bmatrix}$ is the time-discrete full order solution at time $t^\tsind$, as this would ensure that the updated reduced solution is the orthogonal projection of the full order solution onto the new reduced space. However, the full order solution $\fos^{(\tsind)}$ is not available. Since the matrix $\merrappr^{(\tsind)}\in\Rbb^{2N\times p}$ introduced in the previous section is an approximation of the difference $\fos^{(\tsind)}-\ros^{(\tsind)}$, a possible remedy would be to consider the quantity $\ros^{\old}+\merrappr^{(\tsind)}$ as a surrogate for the full order solution $\fos^{(\tsind)}$. On the other hand, computing $\merrappr^{(\tsind)}\in\Rbb^{2N\times p}$ would require to evaluate $\errappr^{(\tsind)}(\prm_s)$ for all test parameters $\prm_s$, $s=1,\dots,p$, and the computational complexity of this operation is $O(Np)$. 
Since $\errappr^{(\tsind)}$ has been computed for $\spei$ parameters in \Cref{algo:RA-EI}, we propose to approximate the values associated to the remaining parameters via interpolation. More precisely, for each test parameter $\prm_s$, we consider the approximation
\begin{equation}\label{eq:interp}
    \errappr^{(\tsind)}(\prm_s)\approx\errapprt^{(\tsind)}(\prm_s) := \sum_{i=1}^{\nti}\coeffint_i\sbfnt_i(\prm_s), \qquad \forall s=1,\dots,p,
\end{equation}
where $\sbfnt_i:\prms\to\Rbb$ are prescribed functions and $\coeffint_i\in\Rbb^{2N}$ for all $i=1,\dots,\nti$. This can equivalently be written as
\begin{equation*}
    \merrappr^{(\tsind)} \approx \merrapprt^{(\tsind)}=\mcoeffint\msbfnt,
\end{equation*}
where $\mcoeffint\in\Rbb^{2N\times\nti}$ is the matrix whose columns are the vectors $\coeffint_i$, for $i=1,\dots,\nti$, and $\msbfnt\in\Rbb^{\nti\times p}$ is the matrix whose $(i,s)$th entry is $\sbfnt_i(\prm_s)$. Next, we impose that the approximation is exact at $\spei$ sample parameters, that is, $\errappr^{(\tsind)}(\prm_{s_j})=\errapprt^{(\tsind)}(\prm_{s_j})$ for $j=1,\dots,\spei$. By introducing the matrix $\spmat\in\Rbb^{p\times\spei}$ whose $j$th column, for $j=1,\dots,\spei$, is the $s_j$th element of the canonical basis of $\Rbb^{p}$, gives $\merrappr^{(\tsind)}\spmat = \mcoeffint\msbfnt\spmat$.
The matrix $\mcoeffint$ can then be computed as
$\mcoeffint = \merrappr^{(\tsind)}\spmat(\msbfnt\spmat)^\dagger$, with pseudoinverse $(\msbfnt\spmat)^\dagger = (\msbfnt\spmat)^\top\big(\msbfnt\spmat(\msbfnt\spmat)^\top\big)^{-1}$.
Notice that a necessary condition for $\msbfnt\spmat\in\Rbb^{\nti\times\spei}$ to admit a pseudoinverse is that $\spei\geq\nti$. Finally we obtain 
\begin{equation*}
    \merrapprt^{(\tsind)} = \merrappr^{(\tsind)}\spmat(\msbfnt\spmat)^\dagger\msbfnt =: \merrappr_{\star}^{(\tsind)}\Psbfnt,
\end{equation*}
where $\Psbfnt=(\msbfnt\spmat)^\dagger\msbfnt\in\Rbb^{\spei\times p}$
and $\merrappr_{\star}^{(\tsind)}=\merrappr^{(\tsind)}\spmat\in\Rbb^{2N\times\spei}$.
In this work, we assume the basis functions $\{\sbfnt_i\}_{i=1}^{\nti}$ to be fixed so that the matrix $\Psbfnt$ can be precomputed, and the evaluation of $\merrapprt^{(\tsind)}$ only requires the computation of $\errappr^{(\tsind)}$ at $\spei<p$ sample parameters. With this, we perform the approximation $\fos^{(\tsind)}\approx\ros^{(\tsind)}+\merrappr_{\star}^{(\tsind)}\Psbfnt$ and we define
\begin{equation*}
    \cX^{\new}=(\redb^{\new})^\top(\redb^{\old}\cX^{\old}+\merrappr_{\star,X}^{(\tsind)}\Psbfnt) \qquad \text{ and } \qquad \cV^{\new}=(\redb^{\new})^\top(\redb^{\old}\cV^{\old}+\merrappr_{\star,V}^{(\tsind)}\Psbfnt).
\end{equation*}
If $A_{\redb}^{\new}\in\mathbb{R}^{2N\times 2n}$ is the block diagonal matrix having the two diagonal blocks equal to $\redb^{\new}\in\mathbb{R}^{N\times n}$, the rank update reads
\begin{equation}\label{eq:thetanew}
    \ros^{\new} = \ros^{\old} + \gamma A_{\redb}^{\new}(A_{\Psi}^{\new})^\top\merrappr_{\star}^{(\tsind)}\Psbfnt.
\end{equation}
Notice that, with the choice $\gamma=1$, $\ros^{\old}\neq\ros^{\new}$ in general, and the quality of the updated reduced solution improves, as long as the error indicator $\merrappr^{(\tsind)}$ is an accurate approximation of the difference between the full order solution and the reduced solution, and the interpolation error associated to \eqref{eq:interp} is sufficiently small.
\begin{proposition}
    Let 
    \begin{equation*}
        \merrappr_{\rm{ind}}^{(\tsind)}:=\fos^{(\tsind)}-\ros^{\old}-\merrappr^{(\tsind)} \quad \text{ and } \quad \merrappr_{\rm{interp}}^{(\tsind)}:=\merrappr^{(\tsind)}-\merrappr_\star^{(\tsind)}\Psbfnt.
    \end{equation*}
    If $\gamma=1$ in \eqref{eq:thetanew} and 
    \begin{equation}\label{eq:EindEinterp}        \normF{\merrappr_{\rm{ind}}^{(\tsind)}+\merrappr_{\rm{interp}}^{(\tsind)}} < \frac{1}{2}\normF{(A_{\redb}^{\new})^\top\merrappr_{\star}^{(\tsind)}\Psbfnt},
    \end{equation}
    then 
    \begin{equation*}
        \normF{\fos^{(\tsind)}-\ros^{\new}} < \normF{\fos^{(\tsind)}-\ros^{\old}}.
    \end{equation*}
\end{proposition}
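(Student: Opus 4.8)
The plan is to decompose the post-update error $\fos^{(\tsind)}-\ros^{\new}$ into a component lying in the orthogonal complement of the enlarged reduced space (which no update of the form \eqref{eq:thetanew} can remove) plus a component along the projected combined error, and then to bound the latter against the former using hypothesis \eqref{eq:EindEinterp}.

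First I would set $P:=A_{\redb}^{\new}(A_{\redb}^{\new})^{\top}$, the Frobenius-orthogonal projector onto the column space of $A_{\redb}^{\new}$. Since $A_{\redb}^{\new}$ is block diagonal with orthonormal-column blocks $\redb^{\new}$, $P$ is symmetric and idempotent and $\normF{PC}=\normF{(A_{\redb}^{\new})^{\top}C}$ for every $C$. Reading \eqref{eq:thetanew} with $\gamma=1$ (and interpreting the $A_{\Psi}^{\new}$ there as $A_{\redb}^{\new}$, which is consistent with the coefficient updates defined just above since $\ros^{\old}\in\mathrm{range}(A_{\redb}^{\old})\subseteq\mathrm{range}(P)$) this gives $\ros^{\new}=\ros^{\old}+P\,\merrappr_{\star}^{(\tsind)}\Psbfnt$. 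Writing $e:=\merrappr_{\rm{ind}}^{(\tsind)}+\merrappr_{\rm{interp}}^{(\tsind)}$, the two definitions in the statement combine to
\[
  \fos^{(\tsind)}-\ros^{\old}=\merrappr^{(\tsind)}+\merrappr_{\rm{ind}}^{(\tsind)}=\merrappr_{\star}^{(\tsind)}\Psbfnt+e,
\]
hence $\merrappr_{\star}^{(\tsind)}\Psbfnt=(\fos^{(\tsind)}-\ros^{\old})-e$ and therefore
\[
  \fos^{(\tsind)}-\ros^{\new}=(I-P)\bigl(\fos^{(\tsind)}-\ros^{\old}\bigr)+Pe.
\]

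Next I would exploit orthogonality: $(I-P)(\fos^{(\tsind)}-\ros^{\old})$ and $Pe$ are orthogonal in the Frobenius inner product, so Pythagoras gives $\normF{\fos^{(\tsind)}-\ros^{\new}}^{2}=\normF{(I-P)(\fos^{(\tsind)}-\ros^{\old})}^{2}+\normF{Pe}^{2}$; applying Pythagoras once more to $\fos^{(\tsind)}-\ros^{\old}=(I-P)(\fos^{(\tsind)}-\ros^{\old})+P(\fos^{(\tsind)}-\ros^{\old})$ shows the desired inequality is equivalent to $\normF{Pe}<\normF{P(\fos^{(\tsind)}-\ros^{\old})}$. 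To obtain this last bound I would write $P(\fos^{(\tsind)}-\ros^{\old})=P\,\merrappr_{\star}^{(\tsind)}\Psbfnt+Pe$, use the reverse triangle inequality together with $\normF{Pe}\le\normF{e}$ and $\normF{P\,\merrappr_{\star}^{(\tsind)}\Psbfnt}=\normF{(A_{\redb}^{\new})^{\top}\merrappr_{\star}^{(\tsind)}\Psbfnt}$, and then feed in \eqref{eq:EindEinterp} in the form $\normF{e}<\tfrac12\normF{(A_{\redb}^{\new})^{\top}\merrappr_{\star}^{(\tsind)}\Psbfnt}$; this yields $\normF{P(\fos^{(\tsind)}-\ros^{\old})}\ge\normF{P\,\merrappr_{\star}^{(\tsind)}\Psbfnt}-\normF{Pe}>2\normF{Pe}-\normF{Pe}=\normF{Pe}$, which closes the argument.

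I do not expect a genuine obstacle here: the proof is a short chain of orthogonal-projection identities and triangle inequalities. The one point requiring care is the bookkeeping in the middle step — correctly identifying $\fos^{(\tsind)}-\ros^{\new}$ with $(I-P)(\fos^{(\tsind)}-\ros^{\old})+Pe$ and then keeping straight which summands lie in $\mathrm{range}(P)$ and which in $\mathrm{range}(P)^{\perp}$ — because once that split is written down the Frobenius--Pythagoras step makes everything else automatic. I would also make explicit the harmless notational fix that $A_{\Psi}^{\new}$ in \eqref{eq:thetanew} should read $A_{\redb}^{\new}$.
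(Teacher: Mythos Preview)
Your proof is correct and somewhat more streamlined than the paper's. The paper expands $\normF{\fos^{(\tsind)}-\ros^{\new}}^2$ via the cosine-law identity $\normF{a}^2+\normF{b}^2+2\langle a,b\rangle_F$ with $a=\fos^{(\tsind)}-\ros^{\old}$ and $b=\ros^{\old}-\ros^{\new}$, arrives at
\[
\normF{\fos^{(\tsind)}-\ros^{\new}}^2=\normF{\fos^{(\tsind)}-\ros^{\old}}^2-\normF{(A_{\redb}^{\new})^\top\merrappr_{\star}^{(\tsind)}\Psbfnt}^2-2\langle e,\,A_{\redb}^{\new}(A_{\redb}^{\new})^\top\merrappr_{\star}^{(\tsind)}\Psbfnt\rangle_F,
\]
and then splits into two cases according to the sign of the inner product, invoking Cauchy--Schwarz in the unfavorable case. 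Your route---writing $\fos^{(\tsind)}-\ros^{\new}=(I-P)(\fos^{(\tsind)}-\ros^{\old})+Pe$, applying Pythagoras twice, and finishing with the reverse triangle inequality---is algebraically equivalent but packages the geometry more cleanly: the orthogonal split isolates exactly the part of the error that the update can touch, and the reverse triangle inequality handles both sign cases at once, so no case distinction is needed. Your observation that $A_{\Psi}^{\new}$ in \eqref{eq:thetanew} should read $A_{\redb}^{\new}$ is also correct and worth noting.
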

\begin{proof}
    We first observe that
    \begin{equation*}
        \normF{\fos^{(\tsind)}-\ros^{\new}}^2 = \normF{\fos^{(\tsind)}-\ros^{\old}}^2 + \normF{\ros^{\old}-\ros^{\new}}^2 + 2\big\langle \fos^{(\tsind)}-\ros^{\old},\ros^{\old}-\ros^{\new}\big\rangle_F,
    \end{equation*}
    where $\langle\cdot,\cdot\rangle_F$ denotes the Frobenius inner product.
    For $\gamma=1$, using \eqref{eq:thetanew}
    and
    \begin{equation*}
        \fos^{(\tsind)}-\ros^{\old} = \merrappr_{\text{ind}}^{(\tsind)} + \merrappr_{\text{interp}}^{(\tsind)} + \merrappr_\star^{(\tsind)}\Psbfnt,
    \end{equation*}
    yields
    \begin{equation*}
        \normF{\fos^{(\tsind)}-\ros^{\new}}^2 = \normF{\fos^{(\tsind)}-\ros^{\old}}^2 -
        \norm{(A_{\Psi}^{\new})^\top\merrappr_{\star}^{(\tsind)}\Psbfnt}_F^2
        -2\langle \merrappr_{\text{ind}}^{(\tsind)}+\merrappr_{\text{interp}}^{(\tsind)},
           A_{\Psi}^{\new}(A_{\Psi}^{\new})^\top\merrappr_{\star}^{(\tsind)}\Psbfnt\rangle_F.
    \end{equation*}
    If the inner product appearing in the last term is non-negative, the result follows straightforwardly. If the last term is negative, then
    applying the Cauchy-Schwarz inequality together with \eqref{eq:EindEinterp} yields the result.
\end{proof}

The procedure for rank adaptivity is summarized in \Cref{algo:rank-update}. The parameter $\gamma\in\{0,1\}$ allows to unify the two strategies we presented in this section: we recover the method of \cite{HPR23} for $\gamma=0$, and our novel interpolation-based approach for $\gamma=1$. Assuming that $2\spei<N$, the arithmetic complexity is $O(N{\spei}^2)+O(Nn\spei)$ when $\gamma=0$, and $O(N{\spei}^2)+O(Nn\spei)+O(pn\spei)$ when $\gamma=1$. The only extra operations required in the second case are the matrix multiplications $(\redb^{\new})^\top\merrappr_{\star,X}^{(\tsind)}\Psbfnt$ and $(\redb^{\new})^\top\merrappr_{\star,V}^{(\tsind)}\Psbfnt$, since the quantity $\merrappr_{\star}^{(\tsind)}$ is already available from \Cref{algo:RA-EI}.
\begin{algorithm}
\caption{Rank update}\label{algo:rank-update}
\normalsize
\begin{algorithmic}[1]
{\small
\Procedure{$(\redb^{\new}, \cX^{\new}, \cV^{\new})=$rank\_update}{$\redb^{\old}, \cX^{\old}, \cV^{\old}, \merrappr_{\star}^{(\tsind)}$,$\gamma\in\{0,1\}$,$\Psbfnt$}
\State $\bm{\psi} \gets$ first left singular vector of
$(I_{N}-\redb^{\old}(\redb^{\old})^\top)\left[\merrappr_{\star,X}^{(\tsind)} \quad \merrappr_{\star,V}^{(\tsind)}\right]$
\State $\redb^{\new} \gets \begin{bmatrix} \redb^{\old} & \bm{\psi}\end{bmatrix}$
\State $\cX^{\new}\gets\begin{bmatrix} \cX^{\old} \\ \mathbf{0}_p^\top \end{bmatrix}+\gamma(\redb^{\new})^\top\merrappr^{(\tsind)}_{\star,X}\Psbfnt$\, and\, $\cV^{\new}\gets\begin{bmatrix} \cV^{\old} \\ \mathbf{0}_p^\top \end{bmatrix}+\gamma(\redb^{\new})^\top\merrappr^{(\tsind)}_{\star,V}\Psbfnt$
\EndProcedure}
\end{algorithmic}
\end{algorithm}

Here we focused on the case where the dimension of the reduced space is to be increased in order to accommodate for a growth of the numerical error, as this is most common in applications. If necessary, rank reduction can simply be achieved by removing columns from the reduced basis matrix associated with directions that have become redundant to describe the dynamics. Moreover, \Cref{algo:rank-update} can be easily generalized to the case where more than one basis function is added to the reduced space.

\section{Hyper-reduction of nonlinear terms}\label{sec:hyper-reduction}
As shown in \Cref{sec:ROM}, the solution of the reduced dynamics \eqref{eq:rom} has a computational cost that still depends on the product of the number of particles $N$ and the number of test parameters $p$ despite dimensionality reduction. For this reason, solving the ROM can in principle be more computationally demanding than solving the FOM. In this section, we propose a hyper-reduction strategy to decouple the operations that depend on $N$ from those that depend on $p$.

The main contribution to the total computational cost can be traced back to two operations. First, the nonlinear, reduced electric field $\redb^\top\mdiscref(\redb\cX)$ has to be computed in \eqref{eq:ZVdot} and \eqref{eq:Phidot}. Although this quantity is a $n\times p$ matrix, this operation requires to evaluate the particle-to-grid map for all $N$ macro-particles. Second, the full order electric field $\mdiscref(\redb\cX)\in\Rbb^{N\times p}$ is still involved in \eqref{eq:Phidot}. We address the second issue first.

\subsection{Parameter subsampling for the electric field}
Observe first that the reason why the electric field associated to all test parameters is needed is that the matrix multiplication with $\cV^\top\in\Rbb^{p\times n}$ is required to evolve the reduced basis in \eqref{eq:Phidot}. As in \cite{PV25}, the idea to alleviate the computational burden associated to this operation is to carry out the matrix multiplication by selecting a subsample of parameters. In particular, note that
\begin{equation}\label{eq:ef_times_Wt}
    \mdiscref\big(\redb(t)\cX(t)\big)\cV^\top(t)
    = \sum_{s=1}^p\nabla\nlham\big(\redb(t)\rposbf(t,\prm_s)\big)\rvelbf(t,\prm_s)^\top.
\end{equation}
We propose to approximate the average over all test parameters using a subset of $\spav<p$ sample parameters $\{\prm_{s_1},\dots,\prm_{s_{\spav}}\}$. Then, we approximate the product in \eqref{eq:ef_times_Wt} as
\begin{equation}\label{eq:psample}
    \mdiscref\big(\redb(t)\cX(t)\big)\cV(t)^\top \approx
    \dfrac{p}{\spav}\mdiscref\big(\redb(t)\cXs(t)\big)\cVs^\top(t)
    := \dfrac{p}{\spav}\sum_{r=1}^{\spav}\nabla\nlham\big(\redb(t)\rposbf(t,\prm_{s_r})\big)\rvelbf(t,\prm_{s_r})^\top
\end{equation}
where $\mdiscref(\redb(t)\cXs(t)),\cVs(t)\in\Rbb^{N\times \spav}$. In this way, it is only required to compute the electric field associated to $\spav$ sample parameters, and the total complexity to evolve the basis in \eqref{eq:Phidot} is reduced from $O(Npn)$ to $O(N\spav n)+O(Nn^2)$. As proposed in \cite{PV25}, the sample parameters are determined based on a QR factorization of the coefficient matrix $\coef(t)\in\Rbb^{2n\times p}$ with column pivoting, whose computational complexity is $O(pn\spav)$. The rationale behind this choice is to select the parameters that are ``most relevant'' for the dynamics.

\subsection{Hyper-reduction via approximation of the particle-to-grid map}
Next, we discuss how to address the evaluation of the reduced electric field $\redb^\top \mdiscref(\redb \cX)$ in the evolution of the particles' positions coefficients \eqref{eq:ZVdot}, which still requires the computation of the particle-to-grid map for all computational particles. The arithmetic complexity of this operation might become prohibitively high, especially when it is performed at each time step for a large number of test parameters.
In this section we present one of the main contributions of this work, namely an extension of the framework of \cite{PV23,PV25} to non-sparse Hamiltonians. In the following we assume that time is fixed, and we omit time dependency for simplicity of notation.

We first remark that, for any $\rposbf\in\Rbb^n$ and $\redb\in\Rbb^{N\times n}$, the nonlinear part \eqref{eq:nonlinear_Ham} of the reduced Hamiltonian can be written as
the sum of $O(\fedim^2)$ terms, where each function $\decdi_j$ in \eqref{eq:gj}, for $1\leq j\leq \kappa$, only depends on the particles contained in the support of the $j$th spatial basis function $\lambda_j$. Then, the idea is to diagonalize the inverse of the stiffness matrix $\stmat^{-1}\in\Rbb^{\fedim\times\fedim}$ to decouple the contributions of the $\fedim$ basis functions and reduce the number of terms in the decomposition of \eqref{eq:nonlinear_Ham} to $O(\fedim)$.
Since $\stmat\in\Rbb^{\kappa\times\kappa}$ is symmetric and positive definite, it can be diagonalized by an orthogonal matrix $V$ and its eigenvalues $\{\delta_\idx\}_{\idx=1}^{\kappa}$ are strictly positive.
We can then write $\nlrham$ in \eqref{eq:nonlinear_Ham} as
\begin{equation*}
    \nlrham(\rposbf;\redb) = \dfrac{N}{2\ell_x}\sum_{\idx=1}^{\fedim}\dfrac{1}{\delta_\idx}(V^{\top}\decd(\redb \rposbf))_\idx^2
    = \sum_{\idx=1}^{\fedim}\dfrac{N}{2\ell_x\delta_\idx}
    \bigg((V^{\top}\sbf)_\idx-\dfrac{\ell_x}{N}\big(V^{\top}\Lambda(\redb \rposbf)^\top\mathbf{1}_N\big)_\idx\bigg)^2.
\end{equation*}
Let us introduce the matrix-valued function
\begin{equation*}
\begin{aligned}
\mGfun: \Rbb^{N} &\quad \longrightarrow\quad \Rbb^{N\times\kappa}\\
\posbf &\quad  \longmapsto\quad \mGfun(\posbf)\quad\mbox{such that}\quad\mGfun(\xbf)^\idx_{\ell}=\sum_{j=1}^{\kappa}V_j^\idx\lambda_j(\pos_{\ell}).
\end{aligned}
\end{equation*}
Then, the nonlinear part of the reduced Hamiltonian can be written as
\begin{equation}\label{eq:Ham_dec}
    \nlrham(\rposbf;\redb) = 
    \sum_{\idx=1}^{\fedim}\left(\sqrt{\dfrac{N}{2\ell_x\delta_\idx}}
    \sbf^{\top} \Vbf^\idx-
    \sqrt{\dfrac{\ell_x}{2N\delta_\idx}}\sum_{\ell=1}^NG(\redb \rposbf)_{\ell}^\idx\right)^2
    =\sum_{\idx=1}^{\fedim}\left(\alpha_\idx + (\boldsymbol{\beta}^\idx)^\top \Gfun^\idx(\redb \rposbf)\right)^2.
\end{equation}
where $\Gfun^\idx:\xbf\in\Rbb^{N}\mapsto\Gfun^\idx(\xbf)\in\Rbb^N$ is such that $\Gfun^\idx(\xbf)_{\ell}=\mGfun(\xbf)^\idx_{\ell}$ according to the definition above, and
\begin{equation*}
    \alpha_\idx:=\sqrt{\frac{N}{2\ell_x\delta_\idx}}\sbf^\top \Vbf^\idx\in\Rbb,
    \quad \quad \boldsymbol{\beta}^\idx:=-\sqrt{\dfrac{\ell_x}{2N\delta_\idx}}\mathbf{1}_N\in\Rbb^N. 
\end{equation*}
For each $\idx=1,\dots,\fedim$, the product $(\boldsymbol{\beta}^\idx)^{\top}\Gfun^\idx(\redb \rposbf)$ is a linear combination of $N$ functions, each depending on the position of one macro-particle.
Using the chain rule, the gradient of $\nlrham$ with respect to $\rposbf$ can be written in terms of $\Gfun^\idx$ and its Jacobian matrix
$\jacG{\idx}\in\Rbb^{N\times N}$ as
\begin{equation}\label{eq:grad_red}
    \nabla_{\rposbf}\nlrham(\rposbf;\redb) = 2\sum_{\idx=1}^{\fedim}\left(\alpha_\idx+(\boldsymbol{\beta}^\idx)^{\top} \Gfun^\idx(\redb \rposbf)\right)\redb^\top \jacG{\idx}(\redb \rposbf)\boldsymbol{\beta}^\idx.
\end{equation}
Since, for any $\xbf\in\Rbb^{N}$, each entry of $\Gfun^\idx(\xbf)$ only depends on one component of the input vector $\xbf$, the Jacobian $\jacG{\idx}(\xbf)\in\Rbb^{N\times N}$ is diagonal
and $\jacG{\idx}(\posbf)_\ell^\ell=\sum_{j=1}^{\kappa}V_j^\idx\nabla \Lambda(\posbf)^j_{\ell}=\sum_{j=1}^{\kappa}V_j^\idx\lambda'_j(\pos_{\ell})$.

Following the approach outlined in \Cref{sec:non-sparse}, we now propose to approximate the vector-valued maps $\Gfun^\idx$ using EIM \cite{BMNP04}. For all $\idx=1,\dots,\fedim$, let $\deimb{\idx}\in\Rbb^{N\times m_\idx}$ be a matrix whose columns span a $m_\idx$-dimensional subspace of $\Rbb^N$, with $m_\idx\geq 1$. Given a set of interpolation indices $\{s^\idx_1,\dots,s^\idx_{m_\idx}\}\subset\{1,\dots,N\}$, we define the matrix $\deimi{\idx}\in\Rbb^{N\times m_\idx}$ whose $j$th column, for $j=1,\dots,m_\idx$, is the $s_j^\idx$ element of the canonical basis of $\Rbb^N$.
Then, the EIM approximation of $\Gfun^\idx$ reads
\begin{equation*}
    \Gfun^\idx(\posbf) \approx \Pbb^\idx \Gfun^\idx(\posbf),\qquad\forall\, \posbf\in\Rbb^{N},
\end{equation*}
%
where $\Pbb^\idx=\deimb{\idx}((\deimi{\idx})^\top \deimb{\idx})^{-1}(\deimi{\idx})^\top\in\Rbb^{N\times N}$ is the EIM projection matrix. 
Replacing this approximation into \eqref{eq:Ham_dec} yields the hyper-reduced nonlinear function
\begin{equation}\label{eq:Ham_hred}
    \nlhrham(\rposbf;\redb) := \sum_{\idx=1}^\fedim\left(\alpha_\idx + (\boldsymbol{\beta}^\idx)^{\top}\Pbb^\idx \Gfun^\idx(\redb \rposbf)\right)^2 = \sum_{\idx=1}^\fedim\left(\alpha_\idx + (\widehat{\boldsymbol{\beta}}^\idx)^\top\widehat{\Gfun}^\idx(\redb \rposbf)\right)^2
\end{equation}
where $\widehat{\boldsymbol{\beta}}^\idx := ((\deimb{\idx})^\top \deimi{\idx})^{-1}(\deimb{\idx})^\top \boldsymbol{\beta}^\idx\in\Rbb^{m_\idx}$ and
$\widehat{\Gfun}^\idx:\xbf\in\Rbb^{N}\mapsto\widehat{\Gfun}^\idx(\xbf)\in\Rbb^{m_\idx}$ is defined as
$\widehat{\Gfun}^\idx(\xbf)_\ell=\Gfun^\idx(\xbf)_{s^\idx_{\ell}}$,
with $1\leq\ell\leq m_{\idx}$.
%
Analogously, we define the hyper-reduced Hamiltonian
\begin{equation}\label{eq:hred_Ham}
    \hrham(\rposbf,\rvelbf;\redb) = \dfrac12 \rvelbf^\top \rvelbf + \nlhrham(\rposbf;\redb) \qquad \forall\rposbf,\rvelbf\in\Rbb^n,\redb\in\Rbb^{N\times n}.
\end{equation}
Notice that the quadratic part of the reduced Hamiltonian corresponds to a linear term in the evolution equation, and, therefore, does not require any hyper-reduction. 

A simple computation shows that the gradient of the hyper-reduced Hamiltonian with respect to the coefficients $\rposbf$ coincides with the hyper-reduced discrete electric field, namely
\begin{equation}\label{eq:grad_hred}
    \nabla_{\rposbf}\nlhrham(\rposbf;\redb) = 2\sum_{\idx=1}^{\fedim}\left(\alpha_\idx+(\widehat{\boldsymbol{\beta}}^\idx)^\top \widehat{\Gfun}^\idx(\redb \rposbf)\right)\redb^\top \left(\jacGhat{\idx}(\redb\rposbf)\right)^\top\widehat{\boldsymbol{\beta}}^\idx\in\Rbb^n,
\end{equation}
where, for any $\xbf\in\Rbb^N$, the entries of $\jacGhat{\idx}(\xbf)\in\Rbb^{m_\idx\times N}$ are
\begin{equation*}
    \jacGhat{\idx}(\xbf)_i^j = \begin{cases}
        \jacG{\idx}(\xbf)_{s_i^\idx}^j & \text{if } j\in\{s_1^\idx,\dots,s_{m_\idx}^\idx\} \\
        0 & \text{otherwise}
    \end{cases}.
\end{equation*}
Therefore, in order to compute the product $\redb^\top\left(\jacGhat{\idx}(\redb \rposbf)\right)^\top\in\Rbb^{n\times m_\idx}$ for a fixed $\idx$, it is sufficient to evaluate the Jacobian $\jacG{\idx}$ only at the $m_\idx$ entries associated to the interpolation indices. 

\subsection{The hyper-reduced model}
Replacing the hyper-reduced Hamiltonian \eqref{eq:hred_Ham} into the evolution equation \eqref{eq:ZVdot} for the reduced particles' velocity and the particle sampling \eqref{eq:psample} into the evolution \eqref{eq:Phidot} of the reduced basis, we end up with the system
\begin{subequations}\label{eq:hrom}
\begin{empheq}[left=\empheqlbrace]{align}
   & \dcX(t) = \cV(t) \label{eq:ZXhrdot} \\
   & \dcV(t) = -\mhrdiscref\big(\redb(t)\cX(t)\big) \label{eq:ZVhrdot}
   \\ 
    & \Dot{\redb}(t)=\big(\redb(t)\redb(t)^\top-I_N\big)\mdiscref\big(\redb(t) \cXs(t)\big)\cVs(t)^\top M^{-1}\big(\cX(t),\cV(t)\big),
   \label{eq:Phihrdot}
\end{empheq}
\end{subequations}
where $\mhrdiscref(\redb\cX)\in\Rbb^{n\times p}$ is the matrix with entries
$\mhrdiscref(\redb\cX)_{\ell}^s=\partial_{y_{\ell}}\nlhrham(\cXbf^s;\redb)$.
We refer to this formulation as the hyper-reduced model (hROM).
Its numerical solution using the time integration scheme proposed in \Cref{sec:pRK}
is summarized in \Cref{algo:PRK2-hr}, where one time step is considered.

In order to assemble the gradient \eqref{eq:grad_hred} of the hyper-reduced Hamiltonian, the function $\Gfun^\idx$ and its Jacobian $\jacG{\idx}$, for any $1\leq\idx\leq\kappa$, need to be evaluated only at $m_{\idx}$ entries of the approximate state $\redb\rposbf$.
Notice that the EIM approximations $\Pbb^{\idx}\Gfun^{\idx}$ of $\Gfun^\idx$, for different values of $\idx$, with $1\leq\idx\leq \kappa$, might share common interpolation indices. In this case, the particle-to-grid map has to be evaluated only once for each particle associated to a repeated index. Let $m\leq\sum_{\idx=1}^{\fedim}m_\idx$ be the number of unique interpolation indices. Then, computing the particle-to-grid map for all $\idx=1,\dots,\fedim$ has complexity $O(m\pdeg)$, and the remaining operations for the computation of the gradient of the nonlinear part of the Hamiltonian \eqref{eq:grad_hred} have complexity $O(nm)$. Since this procedure is carried out for all test parameters, the total complexity is $O(pmn)+O(pm\pdeg)$.

\begin{algorithm}
\caption{One time step of the partitioned Runge-Kutta scheme applied to the hyper-reduced model}\label{algo:PRK2-hr}
\normalsize
\begin{algorithmic}[1]
{\small
\Procedure{$(\redb^{(1)}, \cX^{(1)}, \cV^{(1)})=$PRK-HR}{$\redb^{(0)}, \cX^{(0)}, \cV^{(0)}, \{\deimb{\idx}\}_{\idx=1}^\fedim, \{\deimi{\idx}\}_{\idx=1}^\fedim$}
 \State $\cV^{(1/2)}=\cV^{(0)}-\dfrac{\Delta t}{2}\mhrdiscref(\redb^{(0)}\cX^{(0)})$
 \State Select $\spav$ sample parameters via QR factorization of $\begin{bmatrix}
     \cX^{(0)} \\ \cV^{(1/2)}
 \end{bmatrix}$ with column pivoting
 \State Construct $\cXs^{(0)}$ and $\cVs^{(1/2)}$ by selecting the columns of $\cX^{(0)}$ and $\cV^{(1/2)}$ associated to the $\spav$ sample parameters
 \State $\prpsi^{(1/2)}=-\Delta t\displaystyle\frac{p}{\spav}\mathcal{L}(\cXs^{(0)},\cVs^{(1/2)},\redb^{(0)})M^{-1}(\cX^{(0)},\cV^{(1/2)})$
 \State $\cX^{(1)}=\cX^{(0)}+\Delta t\,\cV^{(1/2)}$
 \State $\redb^{(1/2)}=\retr_{\redb^{(0)}}(\prpsi^{(1/2)})$
 \State Select $\spav$ sample parameters via QR factorization of $\begin{bmatrix}
     \cX^{(1)} \\ \cV^{(1/2)}
 \end{bmatrix}$ with column pivoting
 \State Construct $\cXs^{(1)}$ and $\cVs^{(1/2)}$ by selecting the columns of $\cX^{(1)}$ and $\cV^{(1/2)}$ associated to the $\spav$ sample parameters
\State $\prpsi^{(1)}=\displaystyle\frac{\prpsi^{(1/2)}}{2}-\displaystyle\frac{\Delta t}{2}\displaystyle\frac{p}{\spav}({d\retr_{\redb^{(0)}}}[\prpsi^{(1/2)}])^{-1}
\mathcal{L}(\cXs^{(1)},\cVs^{(1/2)},\redb^{(1/2)})M^{-1}(\cX^{(1)},\cV^{(1/2)})$
 \State $\cV^{(1)}=\cV^{(1/2)}-\displaystyle\frac{\Delta t}{2}\mhrdiscref(\redb^{(1/2)}\cX^{(1)})$
 \State $\redb^{(1)}=\retr_{\redb^{(0)}}(\prpsi^{(1)})$
 \EndProcedure}
\end{algorithmic}
\end{algorithm}

The arithmetic complexity of \Cref{algo:PRK2-hr} is $O(Nn^2)+O(N\spav n)+O(p\spav n)+O(pmn)+O(pm\pdeg)$ and is therefore independent of the product of the number $N$ of macro-particles and the number $p$ of test parameters.

\subsection{Error bound on the EIM approximation}
The quality of the EIM approximation of the reduced gradient described in the previous section is quantified by the following result.
\begin{proposition}\label{prop:error_bound_EIM}
Let $\rposbf\in\Rbb^n$ and $\redb\in\Rbb^{N\times n}$.
Let $\nlrham$ denote the reduced Hamiltonian defined in \eqref{eq:nonlinear_Ham} and let $\nlhrham$ be the hyper-reduced Hamiltonian defined in \eqref{eq:Ham_hred}. Then,
    \begin{equation*}
        \norm{\nabla_{\rposbf}\nlrham(\rposbf;\redb)-\nabla_{\rposbf}\nlhrham(\rposbf;\redb)}\leq K_1\sum_{\idx=1}^{\fedim}\norm{(I-\Pbb^\idx)F^\idx(\redb \rposbf;\redb)}_2+K_2\sum_{\idx=1}^{\fedim}\norm{(I-\Pbb^\idx)\Gfun^\idx(\redb \rposbf)}
    \end{equation*}
    where, for any $\posbf\in\Rbb^{N}$,
        $F^\idx(\posbf;\redb) := \left(\alpha_\idx+(\boldsymbol{\beta}^\idx)^{\top} \Gfun^\idx(\xbf)\right)\jacG{\idx}(\posbf)\redb\in\Rbb^{N\times n}$.
    The constants $K_1$ and $K_2$ are given by
    \begin{equation*}
        K_1=\sqrt{\frac{2\ell_x}{\displaystyle\min_{1\leq \idx\leq\kappa} \delta_\idx}}, \qquad K_2 = \frac{\ell_x}{\sqrt{N}}\max_{1\leq \idx\leq \kappa}\left(\delta_\idx^{-1}\norm{((\deimb{\idx})^\top \deimi{\idx})^{-1}(\deimb{\idx})^\top\mathbf{1}_N}\max_{x\in\Omega_x}\bigg\lvert\sum_{j=1}^{\kappa}V_j^\idx\lambda_j'(x)\bigg\rvert\right).
    \end{equation*} 
\end{proposition}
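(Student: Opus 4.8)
The plan is to compare the two gradients summand by summand in the index $\idx$, after recasting both in a common ``projector'' form, and then to estimate each summand by Cauchy--Schwarz and submultiplicativity, exploiting that every $\jacG{\idx}$ is diagonal and that $\boldsymbol{\beta}^\idx$ is a scalar multiple of $\mathbf{1}_N$. As a first step I would differentiate the hyper-reduced Hamiltonian directly from \eqref{eq:Ham_hred}: since $\jacG{\idx}(\posbf)$ is diagonal, hence symmetric, the chain rule gives
\begin{equation*}
\nabla_{\rposbf}\nlhrham(\rposbf;\redb)=2\sum_{\idx=1}^{\fedim}\big(\alpha_\idx+(\boldsymbol{\beta}^\idx)^\top\Pbb^\idx\Gfun^\idx(\redb\rposbf)\big)\,\redb^\top\jacG{\idx}(\redb\rposbf)(\Pbb^\idx)^\top\boldsymbol{\beta}^\idx ,
\end{equation*}
which agrees with \eqref{eq:grad_hred} upon using $\widehat{\Gfun}^\idx=(\deimi{\idx})^\top\Gfun^\idx$, $(\jacGhat{\idx})^\top\widehat{\boldsymbol{\beta}}^\idx=\jacG{\idx}(\Pbb^\idx)^\top\boldsymbol{\beta}^\idx$ and $(\widehat{\boldsymbol{\beta}}^\idx)^\top\widehat{\Gfun}^\idx=(\boldsymbol{\beta}^\idx)^\top\Pbb^\idx\Gfun^\idx$. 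Setting $c_\idx:=\alpha_\idx+(\boldsymbol{\beta}^\idx)^\top\Gfun^\idx(\redb\rposbf)$, subtracting this expression from \eqref{eq:grad_red} and adding and subtracting $c_\idx\,\redb^\top\jacG{\idx}(\redb\rposbf)(\Pbb^\idx)^\top\boldsymbol{\beta}^\idx$ in each summand, I would split the difference into the two groups
\begin{equation*}
2\sum_{\idx=1}^{\fedim}c_\idx\,\redb^\top\jacG{\idx}(\redb\rposbf)(I-\Pbb^\idx)^\top\boldsymbol{\beta}^\idx
\quad\text{and}\quad
2\sum_{\idx=1}^{\fedim}\big((\boldsymbol{\beta}^\idx)^\top(I-\Pbb^\idx)\Gfun^\idx(\redb\rposbf)\big)\,\redb^\top\jacG{\idx}(\redb\rposbf)(\Pbb^\idx)^\top\boldsymbol{\beta}^\idx ,
\end{equation*}
using $c_\idx-\big(\alpha_\idx+(\boldsymbol{\beta}^\idx)^\top\Pbb^\idx\Gfun^\idx(\redb\rposbf)\big)=(\boldsymbol{\beta}^\idx)^\top(I-\Pbb^\idx)\Gfun^\idx(\redb\rposbf)$ and $I-(\Pbb^\idx)^\top=(I-\Pbb^\idx)^\top$.

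\textbf{The $K_1$ group.} In the first group I would recognise $c_\idx\,\redb^\top\jacG{\idx}(\redb\rposbf)=\big(F^\idx(\redb\rposbf;\redb)\big)^\top$, so that the $\idx$-th summand equals $\big((I-\Pbb^\idx)F^\idx(\redb\rposbf;\redb)\big)^\top\boldsymbol{\beta}^\idx$; Cauchy--Schwarz together with $\norm{\boldsymbol{\beta}^\idx}=\sqrt{\ell_x/(2\delta_\idx)}$ (because $\boldsymbol{\beta}^\idx=-\sqrt{\ell_x/(2N\delta_\idx)}\,\mathbf{1}_N$ and $\norm{\mathbf{1}_N}=\sqrt{N}$) bounds it by $\sqrt{\ell_x/(2\delta_\idx)}\,\norm{(I-\Pbb^\idx)F^\idx(\redb\rposbf;\redb)}_2$. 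Summing over $\idx$, multiplying by $2$, and replacing $\delta_\idx^{-1/2}$ by $(\min_\idx\delta_\idx)^{-1/2}$ produces the $K_1$ contribution with $K_1=\sqrt{2\ell_x/\min_\idx\delta_\idx}$.

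\textbf{The $K_2$ group.} For the second group I would bound $\lvert(\boldsymbol{\beta}^\idx)^\top(I-\Pbb^\idx)\Gfun^\idx(\redb\rposbf)\rvert\le\norm{\boldsymbol{\beta}^\idx}\,\norm{(I-\Pbb^\idx)\Gfun^\idx(\redb\rposbf)}$ and, using that $\redb$ has orthonormal columns, $\norm{\redb^\top\jacG{\idx}(\redb\rposbf)(\Pbb^\idx)^\top\boldsymbol{\beta}^\idx}\le\norm{\jacG{\idx}(\redb\rposbf)}_2\,\norm{(\Pbb^\idx)^\top\boldsymbol{\beta}^\idx}$. Since $\jacG{\idx}(\posbf)$ is diagonal with entries $\sum_{j=1}^{\kappa}V_j^\idx\lambda_j'(\pos_\ell)$, one has $\norm{\jacG{\idx}(\redb\rposbf)}_2\le\max_{x\in\Omega_x}\lvert\sum_{j=1}^{\kappa}V_j^\idx\lambda_j'(x)\rvert$; and since $\boldsymbol{\beta}^\idx$ is parallel to $\mathbf{1}_N$, $(\deimi{\idx})^\top\deimi{\idx}=I$ and $(\Pbb^\idx)^\top=\deimi{\idx}((\deimb{\idx})^\top\deimi{\idx})^{-1}(\deimb{\idx})^\top$, a short computation gives $\norm{(\Pbb^\idx)^\top\boldsymbol{\beta}^\idx}=\sqrt{\ell_x/(2N\delta_\idx)}\,\norm{((\deimb{\idx})^\top\deimi{\idx})^{-1}(\deimb{\idx})^\top\mathbf{1}_N}$. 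Multiplying these three bounds, summing over $\idx$, multiplying by $2$, and factoring out the maximum over $\idx$ yields the $K_2$ contribution. A final triangle inequality between the two groups then gives the claimed estimate.

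\textbf{Main obstacle.} The only genuinely delicate point is the bookkeeping in the first step: verifying that \eqref{eq:grad_hred} is indeed the exact gradient of \eqref{eq:Ham_hred} and can be recast with $\Pbb^\idx$ and $(\Pbb^\idx)^\top$ in the correct places. This hinges on the identity $\jacGhat{\idx}=(\deimi{\idx})^\top\jacG{\idx}$, which holds precisely because $\jacG{\idx}$ is diagonal, so the ``otherwise $0$'' branch in the definition of $\jacGhat{\idx}$ discards nothing. Once the common form and the telescoping split are in place, everything downstream is routine: Cauchy--Schwarz, submultiplicativity of $\norm{\cdot}_2$, orthonormality of the columns of $\redb$ and of $\deimi{\idx}$, and the explicit form $\boldsymbol{\beta}^\idx=-\sqrt{\ell_x/(2N\delta_\idx)}\,\mathbf{1}_N$.
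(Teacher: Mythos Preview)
Your proposal is correct and follows essentially the same route as the paper's proof: the same add--and--subtract of $c_\idx\,\redb^\top\jacG{\idx}(\redb\rposbf)(\Pbb^\idx)^\top\boldsymbol{\beta}^\idx$, the same two-group split, and the same estimates via $\norm{\boldsymbol{\beta}^\idx}=\sqrt{\ell_x/(2\delta_\idx)}$, orthonormality of $\redb$ and $\deimi{\idx}$, and the diagonal bound on $\jacG{\idx}$. The only difference is that you spell out more carefully why \eqref{eq:grad_hred} can be rewritten in the $\Pbb^\idx$/$(\Pbb^\idx)^\top$ form, which the paper simply takes as its starting point.
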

\begin{proof}
    Starting from the definition of the reduced gradient \eqref{eq:grad_red} and of the hyper-reduced gradient \eqref{eq:grad_hred}, subtracting and adding the same quantity
    \begin{equation*}
        2\sum_{\idx=1}^{\fedim}\left(\alpha_\idx +(\boldsymbol{\beta}^\idx)^{\top} \Gfun^\idx(\redb \rposbf)\right)\redb^\top \jacG{\idx}(\redb \rposbf)(\Pbb^\idx)^\top \boldsymbol{\beta}^\idx
    \end{equation*}
    and using the triangle inequality, we get
    \begin{align*}
        \norm{\nabla_{\rposbf}\nlrham(\rposbf;\redb)-\nabla_{\rposbf}\nlhrham(\rposbf;\redb)}
        &\leq2\norm{\sum_{\idx=1}^{\fedim}\left(\alpha_\idx + (\boldsymbol{\beta}^\idx)^{\top} \Gfun^\idx(\redb \rposbf)\right)\redb^\top \jacG{\idx}(\redb \rposbf)\big(I-\Pbb^\idx\big)^\top \boldsymbol{\beta}^\idx}\\
        &+2\norm{\sum_{\idx=1}^{\fedim}\left((\boldsymbol{\beta}^\idx)^{\top}(I-\Pbb^\idx)\Gfun^\idx(\redb \rposbf)\right)\redb^\top \jacG{\idx}(\redb \rposbf)(\Pbb^\idx)^\top \boldsymbol{\beta}^\idx}\\&\leq2\sum_{\idx=1}^{\fedim}\norm{\boldsymbol{\beta}^\idx}\norm{(I-\Pbb^\idx)F^\idx(\redb \rposbf;\redb)}_2\\&+2\sum_{\idx=1}^{\fedim}\norm{\boldsymbol{\beta}^\idx}\norm{\redb^\top \jacG{\idx}(\redb \rposbf)(\Pbb^\idx)^\top \boldsymbol{\beta}^\idx}\norm{(I-\Pbb^\idx)\Gfun^\idx(\redb \rposbf)}.
    \end{align*}
    We next observe that $\norm{\boldsymbol{\beta}^\idx}=\sqrt{\ell_x(2\delta_\idx)^{-1}}$ and that
    \begin{align*}
        \norm{\redb^\top \jacG{\idx}(\redb \rposbf)(\Pbb^\idx)^\top \boldsymbol{\beta}^\idx}
        &\leq\norm{\redb^\top}_2\norm{\jacG{\idx}(\redb \rposbf)}_2\norm{\deimi{\idx}}_2\norm{((\deimb{\idx})^\top \deimi{\idx})^{-1}(\deimb{\idx})^\top \boldsymbol{\beta}^\idx}\\&=\sqrt{\frac{\ell_x}{2N\delta_\idx}}\norm{\jacG{\idx}(\redb \rposbf)}_2\norm{((\deimb{\idx})^\top \deimi{\idx})^{-1}(\deimb{\idx})^\top\mathbf{1}_N}\\
         &\leq\sqrt{\dfrac{\ell_x}{2N\delta_\idx}}\norm{((\deimb{\idx})^\top \deimi{\idx})^{-1}(\deimb{\idx})^\top\mathbf{1}_N}\max_{1\leq\ell\leq N}\left\lvert\jacG{\idx}(\redb \rposbf)_\ell^\ell\right\rvert.
    \end{align*}
    Finally, we have
    \begin{align*}
        \max_{1\leq\ell\leq N}\left\lvert\jacG{\idx}(\redb \rposbf)_\ell^\ell\right\rvert= \max_{1\leq\ell\leq N}\bigg\lvert\sum_{j=1}^{\kappa}V_j^\idx\lambda'_j(\pos_{\ell})\bigg\rvert \leq \max_{x\in\Omega_x}\bigg\lvert\sum_{j=1}^{\kappa}V_j^\idx\lambda'_j(x)\bigg\rvert
    \end{align*} 
    which gives $K_2$.
\end{proof}
An explicit expression of the constants $K_1$ and $K_2$ appearing in the error bound is derived in \ref{sec:k1} in the case of a one-dimensional domain discretized using a uniform grid and continuous, piecewise linear basis functions.

\begin{remark}
    The result of \Cref{prop:error_bound_EIM} can be extended to decompositions of the form
    \begin{equation*}
        \nlrham(\rposbf;\redb) = \sum_{\idx=1}^D\mathscr{F}(\alpha_\idx+(\boldsymbol{\beta}^\idx)^{\top} \Gfun^\idx(\redb\rposbf))
    \end{equation*}
    with general $\mathscr{F}:\Rbb\to\Rbb$ only satisfying mild assumptions. The decomposition of the Hamiltonian in the Vlasov-Poisson problem \eqref{eq:Ham_dec} corresponds to $D=\fedim$ and $\mathscr{F}(x)=x^2$. In this sense, this result is an extension of the work in \cite{PV23,PV25}, which was limited to the case $D=1$ and $\mathscr{F}(x)=x$.
\end{remark}

\subsection{Construction of the EIM space}
\Cref{prop:error_bound_EIM} suggests that an accurate approximation of the reduced gradient is achieved by minimizing the projection errors of $F^\idx$ and $\Gfun^\idx$ associated to the EIM projection matrix $\Pbb^\idx$. In light of this result, in order to construct the EIM basis $\deimb{\idx}$ and the EIM interpolation indices $\deimi{\idx}$ for all $\idx\in\{1,\dots,\fedim\}$, we assemble the snapshot matrix at time $t$
\begin{equation}\label{eq:EIMsm}
    \Scal^\idx(t) = \begin{bmatrix}
        \Scal^\idx_\Gfun(t) & \Scal^\idx_{F}(t)
    \end{bmatrix}\in\Rbb^{N\times(n+1)\spdb}
\end{equation}
associated to $\spdb$ sample parameters $\{\prm_{s_1},\dots,\prm_{s_{\spdb}}\}\subset\{\prm_1,\dots,\prm_p\}$, where
\begin{align*}
    \Scal^\idx_\Gfun(t) &= \begin{bmatrix}
        \Gfun^\idx(\redb(t) \rposbf(t,\prm_{s_1})) & \dots & \Gfun^\idx(\redb(t) \rposbf(t,\prm_{s_{\spdb}}))
    \end{bmatrix} \in \Rbb^{N\times\spdb} \\
    \Scal^\idx_{F}(t) &= \begin{bmatrix}
        F^\idx(\redb(t)\rposbf(t, \prm_{s_1});\redb(t)) & \dots & F^\idx(\redb(t) \rposbf(t,\prm_{s_{\spdb}});\redb(t))
    \end{bmatrix} \in \Rbb^{N\times n\spdb}.
\end{align*}
The EIM basis matrix and interpolation indices at the initial time can be constructed from snapshots of the initial condition. Then, the EIM approximation is updated over time during the simulation. In \cite{PV25}, this operation is performed via low-rank updates of a small subsample of rows of the EIM basis matrix, following the approach of \cite{Peh20}. In this work, we employ a greedy algorithm to reconstruct the EIM basis at every adaptation step. The main reason behind this choice is that the solutions of the Vlasov-Poisson problem lack coherent structures that are local in space. In this case, the adaptive algorithm of \cite{PV25} might require to update all rows of the EIM basis to keep the approximation error sufficiently low, leading to a large computational cost (see \cite[Section 8.2]{PV25}). Moreover, reconstructing the EIM basis allows to automatically update the dimension of the EIM space, which is useful in situations where the numerical rank of the nonlinear term evolves over time.

More in detail, we rely on the greedy technique outlined in \Cref{algo:gEIM}.

\begin{algorithm}
\caption{Greedy construction of the EIM basis and interpolation indices}\label{algo:gEIM}
\normalsize
\begin{algorithmic}[1]
{\small
\Procedure{$(\{\deimb{\idx}\}_{\idx=1}^\fedim,\{\deimi{\idx}\}_{\idx=1}^\fedim)$=greedyEIM}{$\redb$, $\cX$, tol}
\For{$\idx=1,\dots,\fedim$}
\State Construct the snapshot matrix $\Scal^\idx$ as in \eqref{eq:EIMsm}
\State $R=[\Rbf^1 \dots \Rbf^{(n+1)\spdb}]\gets\Scal^\idx$
\State Select $j_1$ such that $\norm{\Rbf^{j_1}}=\max_{1\leq l\leq(n+1)\spdb}\norm{\Rbf^l}$
\State $m\gets1$, $\deimb{\idx}\gets[\quad]$, $\deimi{\idx}=\emptyset$
\While{$\norm{\Rbf^{j_m}}>$ tol}
\State $\deimb{\idx}\gets [\deimb{\idx} \quad \Rbf^{j_m}]$
\State $\deimi{\idx}\gets \deimi{\idx}\cup\{\text{argmax}_l\lvert \Rbf^{j_m}_l\rvert\}$
\State $R\gets \Scal^\idx - \Pbb^{\idx}\Scal^\idx$
\State Select $j_{m+1}$ such that $\norm{\Rbf^{j_{m+1}}}=\max_{1\leq l\leq(n+1)\spdb}\norm{\Rbf^l}$
\State $m\gets m+1$
\EndWhile
\EndFor
\EndProcedure}
\end{algorithmic}
\end{algorithm}
For a given $\idx\in\{1,\dots,\fedim\}$, this algorithm computes the EIM basis and interpolation simultaneously with computational complexity $O(Nn^2\spdb)$. In particular, we require that $\spdb\ll p$, so that the cost of assembling the snapshot matrix $\Scal^\idx$ and constructing the EIM space does not spoil the overall complexity of the method. Observe that the snapshots can be assembled from quantities that are already available from the time evolution of the hyper-reduced model \eqref{eq:hrom}. In particular, we do not require any knowledge of the full order solution. We run \Cref{algo:gEIM} every $\eimuf\geq1$ time steps during the simulation to recompute the EIM approximation. The set of sample parameters is adapted based on a QR factorization of the coefficient matrix with column pivoting.
We remark that, although the process of reconstructing the EIM basis and interpolation points for all $\idx\in\{1,\dots,\fedim\}$ might become expensive for large values of $\fedim$, it is typically not the dominant operation in the solution of the hyper-reduced system, for several reasons. First, it is typically not necessary to run \Cref{algo:gEIM} at every time step, that is, setting $\eimuf>1$ is sufficient to keep the EIM approximation error under control. We refer to \Cref{sec:num_exp} for more details on this aspect. Second, every EIM projection can be computed independently, so that \Cref{algo:gEIM} can be easily parallelized. Finally, the number of computational particles and test parameters is always assumed to be larger than $\fedim$, which is in turn related to the number of spatial intervals and the polynomial degree of the finite element space.

\subsection{Summary of the algorithm}
The entire procedure for the numerical solution of the hROM is summarized in \Cref{algo:VP-hROM}. The input is the initial condition of the problem, a tolerance for the construction of the EIM basis, the frequency of the EIM updates $\eimuf\geq 1$ and the control parameters $C_1$, $C_2$ for rank adaptivity. The reduced basis at the initial time is constructed via cotangent lift of the initial condition \cite{PM16}, which ensures that the initial reduced basis matrix is block-diagonal, as discussed in \Cref{sec:ROM}. The coefficient matrices $\cX^{(0)}$ and $\cV^{(0)}$ are computed as the projection of $\fX^{(0)}$ and $\fV^{(0)}$ onto the reduced space.
Then, the initial EIM basis and interpolation indices are computed at line 5 using \Cref{algo:gEIM}. At lines 6 to 8, the error indicator is initialized by computing the projection error at the initial time associated to $\spei$ sample parameters. Next, at each time step, the reduced basis and coefficient matrices are first advanced according to \Cref{algo:PRK2-hr} using the current EIM approximation, and the error indicator is computed at line 12 following \Cref{algo:RA-EI}. If the value of the latter is large enough according to the control parameters $C_1$ and $C_2$, the rank of the reduced solution is updated as described in \Cref{algo:rank-update}. Finally, \Cref{algo:gEIM} is executed every $\eimuf$ time steps at line 19 to update the EIM approximation.

\begin{algorithm}[H]
\caption{VP-hROM}\label{algo:VP-hROM}
\normalsize
\begin{algorithmic}[1]
{\small
\Procedure{VP-hROM}{$\fX^{(0)}$, $\fV^{(0)}$, tol, $\eimuf$, $\cone$, $\ctwo$, $\gamma$}
\State Build $T\in\Rbb^{\fedim\times\fedim}$
\State Construct $\redb^{(0)}\in\Rbb^{N\times n}$ from $\begin{bmatrix}
    \fX^{(0)} & \fV^{(0)}
\end{bmatrix}$ via cotangent lift
\State $\cX^{(0)}\gets(\redb^{(0)})^\top\fX^{(0)}$, $\cV^{(0)}\gets(\redb^{(0)})^\top\fV^{(0)}$
\State $(\{\deimb{\idx}\}_{\idx=1}^\fedim,\{\deimi{\idx}\}_{\idx=1}^\fedim)=\textsc{greedyEIM}(\redb^{(0)},\cX^{(0)},\text{tol})$ as in \Cref{algo:gEIM}
\State Select $\spei$ parameters $\{\prm_{s_1},\dots,\prm_{s_{\spei}}\}$ for the computation of the error indicator
\State Compute the electric field $\mdiscref_\star^{(0)}$ associated to the sample parameters at the initial time
\State Compute the matrix $\Psbfnt$ as in \Cref{sec:coeff_upd}
\State $\merrappr^{(0)}_{\star}\gets \fostar^{(0)}-\rostar^{(0)}$, \quad $\overline{\errind}_\star\gets \normF{\merrappr^{(0)}_{\star}}\normF{\fostar^{(0)}}^{-1}$, \quad $\mu \gets 0$
\For{$\tsind=1,\dots,\nt$}
    \State $(\redb^{(\tsind)},\cX^{(\tsind)},\cV^{(\tsind)})=\textsc{PRK-HR}(\redb^{(\tsind-1)},\cX^{(\tsind-1)},\cV^{(\tsind-1)},\{\deimb{\idx}\}_{\idx=1}^\fedim,\{\deimi{\idx}\}_{\idx=1}^\fedim)$ as in \Cref{algo:PRK2-hr}
    \State $(\errind_{\star}^{(\tsind)},\merrappr_{\star}^{(\tsind)},\mdiscref_{\star}^{(\tsind)},\rostar^{(\tsind)})=\textsc{RA-EI}(\merrappr_{\star}^{(\tsind-1)},\mdiscref_{\star}^{(\tsind-1)},\rostar^{(\tsind-1)},\redb^{(\tsind)},\cX^{(\tsind)},\cV^{(\tsind)})$ as in \Cref{algo:RA-EI}
    \If{$\errind_{\star}^{(\tsind)}\geq C_1C_2^\mu\overline{\errind}_\star$}
        \State $(\redb^{(\tsind)},\cX^{(\tsind)},\cV^{(\tsind)})=\textsc{rank\_update}(\redb^{(\tsind)},\cX^{(\tsind)},\cV^{(\tsind)},\merrappr_\star^{(\tsind)},\gamma,\Psbfnt)$  as in \Cref{algo:rank-update}
        \State $\mu\gets\mu+1$
        \State $\overline{\errind}_\star\gets\errind^{(\tsind)}_\star$
    \EndIf
    \If{mod$(\tsind,\eimuf)=0$}
        \State $(\{\deimb{\idx}\}_{\idx=1}^\fedim,\{\deimi{\idx}\}_{\idx=1}^\fedim)=\textsc{greedyEIM}(\redb^{(\tsind)},\cX^{(\tsind)},\text{tol})$ as in \Cref{algo:gEIM}
    \EndIf
\EndFor
\EndProcedure}
\end{algorithmic}
\end{algorithm}

\section{Numerical experiments}\label{sec:num_exp}
In this section we test the proposed hyper-reduction strategy on two benchmark cases: the nonlinear Landau damping (NLLD) and the two-stream instability (TSI). The setup of the numerical experiments is analogous to \cite{HPR24}. The initial positions and velocities of the macro-particles are sampled from the perturbed distribution
\begin{equation}\label{eq:incond}
    f(0,x,v;\prm) = \left(1 + \prma\cos{(\wn x)}\right)f_v(v;\prmsd)
\end{equation}
where the parameter $\prm=(\prma,\prmsd)\in\prms\subset\Rbb^2$ is given by the amplitude $\prma$ of the perturbation and the standard deviation $\prmsd$ of the velocity distribution. We set $\Omega_x=\left[0,2\pi\wn^{-1}\right]$ and $\Omega_v=\left[-10,10\right]$. The value of the wavenumber $\wn$ and the expression of the velocity distribution $f_v$ will be specified for each test case. The initial condition is obtained by evaluating the inverse cumulative distribution function of $f$ at the points defined by the quasirandom Hammersley sequence. This choice is known to yield a significant noise reduction compared to random initialization \cite{Syd99}. The problem is solved for $p$ uniformly selected test parameters in $\prms$, and the time interval $[0,T]$ is discretized with a uniform time step $\Delta t$. Our goal is to assess the performance of the hyper-reduced system compared to the reduced and full order model in terms of numerical accuracy and computational efficiency. In particular, we consider the relative error in the Frobenius norm defined as
\begin{equation}\label{eq:rel_err}
    \solerr(t^{\tsind}) := \dfrac{\normF{\fos^{(\tsind)} - \Theta^{(\tsind)}}}{\normF{\fos^{(\tsind)}}}
\end{equation}
and we distinguish between $\solerr^{r}$ and $\solerr^{\hr}$ depending on whether $\Theta^{(\tsind)}$ is the solution of the reduced or hyper-reduced model at time $t^\tsind$, respectively.
In the rank adaptive case, we are interested in the average relative error in time, that we define as
\begin{equation}\label{eq:rel_err_avg}
    \avgsolerr := \dfrac{1}{T}\int_0^T\frac{\normF{\fos(t) - \Theta(t)}}{\normF{\fos(t)}}\,dt.
\end{equation}
In practice, the integral appearing in \eqref{eq:rel_err_avg} is approximated using a suitable quadrature rule.
We also measure the conservation of the full order Hamiltonian by means of the quantity
\begin{equation}\label{eq:err_ham}
    \hamerr(t^\tsind) = \dfrac{1}{p}\sum_{s=1}^p\frac{\lvert\ham(\Theta^{(\tsind)}(\prm_s))-\ham(\Theta^{(0)}(\prm_s))\rvert}{\lvert\ham(\Theta^{(0)}(\prm_s))\rvert},
\end{equation}
where $\Theta^{(\tsind)}$ is either the solution of the FOM, the solution of the ROM or the solution of the hROM.
This is an indicator of the relative variation of the full order Hamiltonian $\ham$ at time $t^\tsind$ with respect to the initial condition, averaged over all test parameters. We denote by $\hamerr^{\fom}$, $\hamerr^{\rom}$ and $\hamerr^{\hr}$ the values of $\hamerr$ associated to the solution of the FOM, ROM and hROM, respectively. 

In all numerical experiments on the hROM, the EIM basis and interpolation indices are reconstructed every $\eimuf=20$ time steps with tolerance $10^{-4}$ in \Cref{algo:gEIM}.


\subsection{Nonlinear Landau damping}\label{sec:NLLD}
In the first test case we consider the nonlinear Landau damping benchmark. We take the initial condition \eqref{eq:incond} with velocity distribution
\begin{equation*}
    f_v(v;\prmsd) = \frac{1}{\sqrt{2\pi\prmsd^2}}\text{exp}\left(-\frac{v^2}{2\prmsd^2}\right).
\end{equation*}
We set $\prms=[0.46,0.5]\times[0.96,1]$ and $\wn=0.5$. The number of macro-particles is set to $N=10^5$ and the spatial domain $\Omega_x$ is discretized using $N_x=64$ uniform spatial intervals. The problem is solved for $p=100$ test parameters until the final time $T=40$ on a uniform time grid of $N_t=20000$ intervals, corresponding to a time step $\Delta t=0.002$. The choice of a small time step allows to study the approximation error associated to the ROM and hROM without spurious contributions due to the temporal discretization. 

We first focus on the non-rank-adaptive case (NRA) with $n=3$, and we show in \Cref{fig:NLLD_distfun} the distribution function for one randomly chosen test parameter, $\prm=(\prma,\prmsd)=(0.4644, 0.9867)$, obtained with the FOM, ROM and hROM at three time instants, $t\in\{0,20,40\}$. 

\begin{figure}[H]
    \centering
    \includegraphics[width=0.85\textwidth]{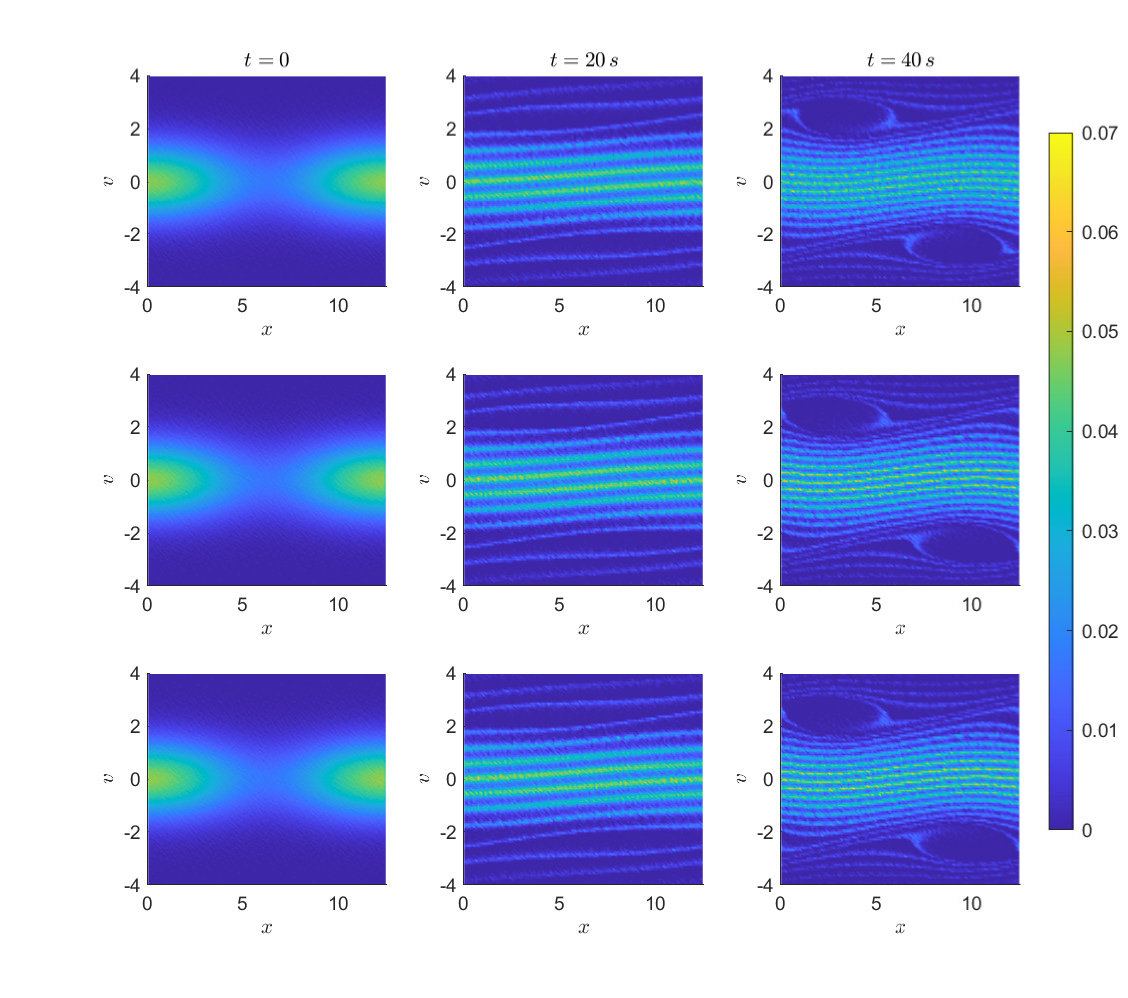}
    \vspace{-3em}
    \caption{\footnotesize NLLD. Numerical distribution function $f_h(t,x,v;\prm)$ at times $t=0$ (left), $t=20$ (center) and $t=40$ (right) for $\prm=(\prma,\prmsd)=(0.4644, 0.9867)$. Comparison between the FOM (first row), the ROM with $n=3$ (second row), and the hROM with $n=3$ and $m$ varying (third row).}
    \label{fig:NLLD_distfun}
\end{figure}

\noindent We observe that both the reduced and the hyper-reduced models correctly reproduce the qualitative behavior of the full order solution.
The Landau damping leads to a decrease of the potential energy in the first part of the simulation. As the dynamics evolves, the Landau damping rate decreases and trapped particles cause the potential energy of the system to increase (see also \Cref{fig:NLLD_energy}).

For a quantitative assessment, we report in \Cref{fig:NLLD_errors} the evolution of the relative errors over time. Moreover, the computational runtimes are reported in \Cref{tab:NLLD_ct} together with the relative errors at the final time. The results show that the proposed hyper-reduction strategy with $n=2$ and $n=3$ yields a reduction of the computational time by a factor of $13$ and $11$ with respect to the full order model, respectively. By contrast, solving the reduced order model does not yield any computational speed up. Moreover, the accuracy of the hROM is comparable to that of the ROM at all times.

\begin{figure}[H]
\centering
\includegraphics[]{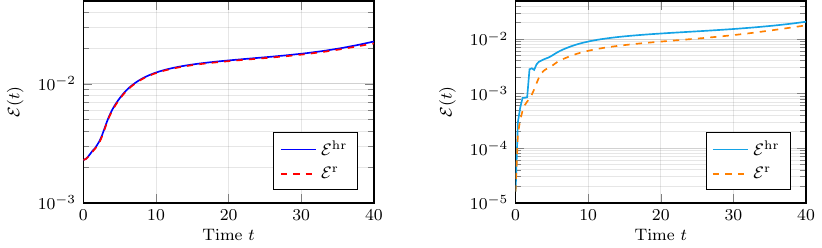}
\caption{\footnotesize NLLD. Relative errors \eqref{eq:rel_err} of the ROM and hROM with respect to the FOM solution: $n=2$ (left) and $n=3$ (right).}\label{fig:NLLD_errors}
\end{figure}

\renewcommand{\arraystretch}{1.25}
\begin{table}[H]
\small
    \vspace{0.5cm}
    \centering
    \begin{tabular}{c|c|c|c}
    $n=2$ & Runtime & $\solerr(T)$ & speedup\\
    \hline
    ROM & 15521.9\,s & 2.19e-02 & 0.99 \\ \hline
    hROM & 1137.7\,s & 2.27e-02 & 13.48 \\
    \end{tabular}
    \qquad\qquad
    \begin{tabular}{c|c|c|c}
    $n=3$ & Runtime & $\solerr(T)$ & speedup \\
    \hline
    ROM & 15728.3\,s & 1.77e-02 & 0.98 \\ \hline
    hROM & 1349.7\,s & 2.07e-02 & 11.36 \\ 
    \end{tabular}
    \caption{\footnotesize NLLD. Computational runtimes and relative errors at the final time of the ROM and hROM for $n=2$ and $n=3$. We also report the reduction factor, defined as the ratio between the runtime of the FOM and the runtime of the ROM or hROM. The full order model has dimension $2N=2\times10^5$, and it is solved in $15339.1\,s$. The number of test parameters is $p=100$.}\label{tab:NLLD_ct}
\end{table}

In \Cref{fig:NLLD_m} we show the evolution of the dimension $m$ of the EIM space, or equivalently the number of macro-particles selected by the greedy \Cref{algo:gEIM}. We recall that in the hROM the particle-to-grid map is only evaluated at $m$ particles: in this test case, this number only constitutes about $0.4\%$ of the total number of particles if $n=2$, and $1\%$ if $n=3$. The larger value of $m$ in the case $n=3$ can be ascribed to the fact that the reducibility properties of the gradient of the reduced Hamiltonian degrade as the dimension of the reduced space increases, so that a largest EIM space is required to achieve the same level of accuracy \cite{PV23}. This is also the reason behind the slight decrease of the runtime gain factor observed for $n=3$ in \Cref{tab:NLLD_ct}. We also mention that it might be possible to optimize the value of $m$ at each update by developing an adaptive strategy for the selection of the stopping tolerance of \Cref{algo:gEIM}. While its value is fixed in these numerical experiments, investigating the effect of a dynamically adapted tolerance on the overall efficiency would be an interesting direction for future work.
\begin{figure}[H]
\centering
\includegraphics[]{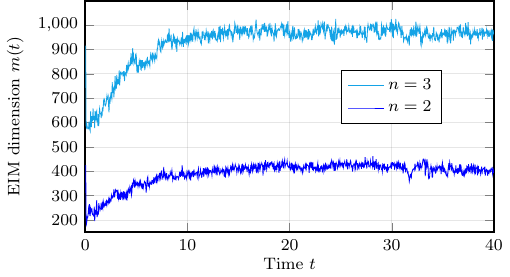}
\caption{\footnotesize NLLD. Evolution of the dimension of the EIM approximation space.}
\label{fig:NLLD_m}
\end{figure}

In \Cref{fig:NLLD_energy} we plot the evolution of the electric potential energy $\nlham$ \eqref{eq:Hamiltonian} evaluated at the
FOM, ROM and hROM solutions for two random values of the test parameter $\prm$. We remark that, although random choices of $\prm$ are considered here for illustration purposes, we observed qualitatively similar results for different values of the test parameter.
The error \eqref{eq:err_ham} in the conservation of the Hamiltonian is plotted in \Cref{fig:NLLD_hamiltonian}. It can be observed that the full order Hamiltonian is not preserved exactly by the reduced and hyper-reduced order models. As noted in \cite{PV25}, the reason for this behavior can be attributed to several factors. First, the time integrator employed to evolve the expansion coefficients is symplectic but does not preserve the Hamiltonian at each time step. Second, the reduced Hamiltonian \eqref{eq:red_Ham} and the hyper-reduced Hamiltonian \eqref{eq:hred_Ham} are approximations of the full order Hamiltonian \eqref{eq:Hamiltonian}. Third, both the reduced basis and the EIM basis are updated in the hROM, introducing a further error in the conservation of invariants. Nevertheless, the error in the conservation of the Hamiltonian remains bounded in the hROM, and its time evolution is qualitatively similar to the benchmark provided by the ROM.
\begin{figure}[H]
\centering
\includegraphics[]{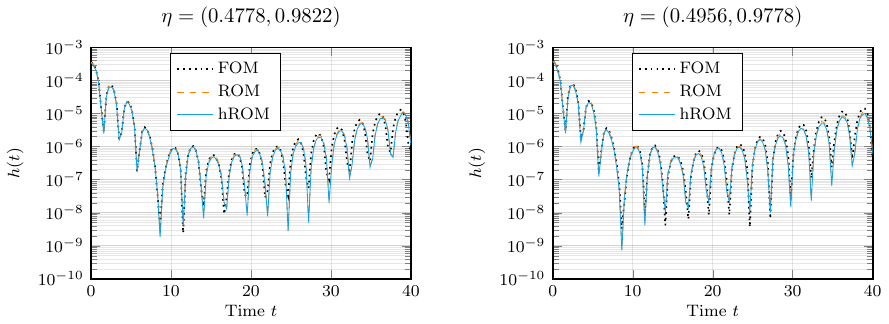}
\caption{\footnotesize NLLD. Evolution of the electric energy $\nlham$ evaluated at the FOM, ROM, and hROM solutions with $n=3$ and for two random choices of the parameter $\prm$.}\label{fig:NLLD_energy}
\end{figure}

\begin{figure}[H]
\centering
\includegraphics[]{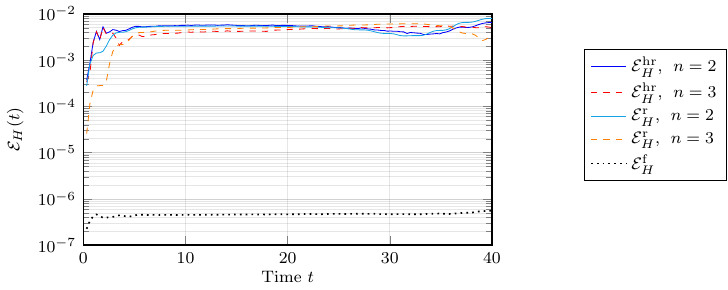}
\caption{\footnotesize NLLD. Evolution of the error in the conservation of the Hamiltonian \eqref{eq:err_ham}. Comparison between the FOM, the ROM, and the hROM with $n=2$ and $n=3$.}\label{fig:NLLD_hamiltonian}
\end{figure}

Finally, we assess the efficiency of the hyper-reduced model as a function of the number of test parameters when compared to the full order model and the reduced order model, for a fixed dimension of the reduced space. To this end, we set $n=3$ and we compute the average runtime obtained in the first $100$ time steps. Results are shown in \Cref{fig:NLLD_comptimes}. As expected, the ROM is as computationally expensive as the FOM, and its runtime is proportional to the number of test parameters. On the other hand, the computational complexity of the hROM grows less rapidly for smaller values of $p$. This is due to the fact that, in this regime, the most expensive operations involved in the solution of the hyper-reduced system are those required to evolve the reduced basis, whose computational complexity is $O(Nn^2)$. We remark that solving the FOM or ROM for $p=10^2$ is approximately as demanding as solving the hROM with $p=5\cdot 10^3$, and the total runtime is reduced by a factor of $50$ when $p=10^4$. 
\begin{figure}[H]
\centering
\includegraphics[]{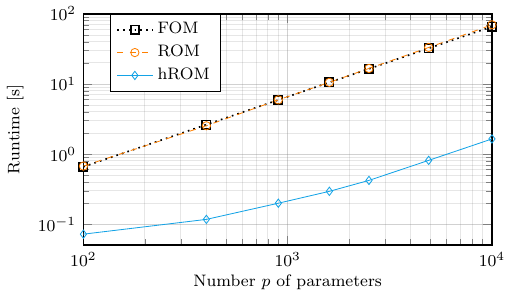}
\caption{\footnotesize NLLD. Average runtime per time step as a function of the number $p$ of test parameters.}\label{fig:NLLD_comptimes}
\end{figure}

\subsubsection{Rank adaptivity}
In order to curb the growth of the numerical errors observed in \Cref{fig:NLLD_errors}, we apply the rank-adaptive (RA) strategy proposed in \Cref{sec:rank-adaptive}. We focus on the hROM, and we set $N=10^5$ and $p=100$ as in the previous section. In order to measure the variability across the parameter space, we study the evolution of the numerical rank of the full order solution $\fos(t)\in\Rbb^{2N\times p}$ at each time $t$ in terms of its $\varepsilon$-rank, defined as
\begin{equation*}
    \text{rank}_\varepsilon(\fos(t)) := \min\left\{n\in\mathbb{N} : \frac{\normF{\fos(t)-\fosvd{n}(t)}}{\normF{\fos(t)}}<\varepsilon \quad \text{ where $\fosvd{n}(t)$ is the $n$-truncated SVD of $\fos(t)$}\right\}.
\end{equation*}
This quantity is shown for different values of $\varepsilon$ in \Cref{fig:NLLD_numrank}. The rapid growth of the numerical rank motivates the need to adapt the dimension of the reduced space.
\begin{figure}[H]
\centering
\includegraphics[]{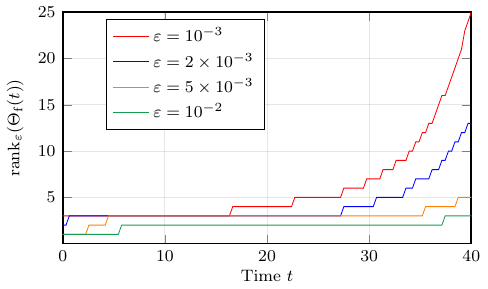}
\caption{\footnotesize NLLD. Evolution of the $\varepsilon$-rank of the full order solution $\fos(t)$ for different values of $\varepsilon$.}\label{fig:NLLD_numrank}
\end{figure}

In a first test, we compute the error indicator at every time step as in \Cref{algo:RA-EI}, with the approximate residual evaluated at $\spei=p$ sample parameters. The basis update is performed according to criterion \eqref{eq:update_criterion} with $\cone=\ctwo=1.05$. At each update, one new vector is added to the reduced basis, and the coefficient matrices of the reduced solution are augmented with two rows of zeros, that is, we set $\gamma=0$ in \Cref{algo:rank-update}. We recall that this is the procedure proposed in \cite{HPR24}, and it is equivalent to projecting the reduced solution before the update onto the enlarged reduced space. We report in \Cref{fig:NLLD_RA_updzeros} the evolution of the relative error and of the reduced dimension $n(t)$ in the rank-adaptive case, and a comparison with the non-rank-adaptive case with $n=2$. 
\begin{figure}[H]
\centering
\includegraphics[]{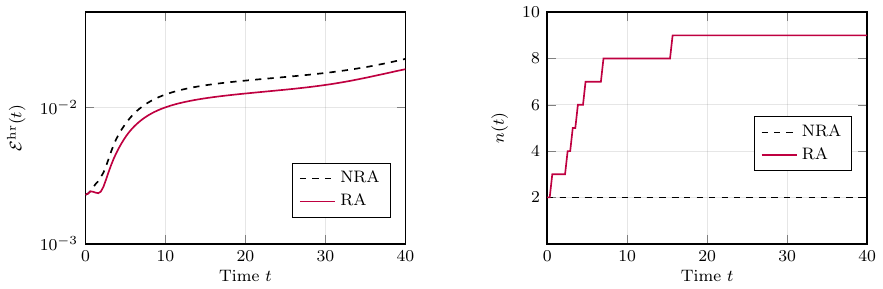}
\caption{\footnotesize NLLD. 
Evolution of the relative error \eqref{eq:rel_err} (left) and of the dimension of the reduced basis (right) for the rank-adaptive and non-rank-adaptive algorithms.
Rank-adaptive system with $\gamma=0$ in \Cref{algo:rank-update} and $\cone=\ctwo=1.05$ in \eqref{eq:update_criterion}. The error indicator is computed at all time steps using all test parameters.}\label{fig:NLLD_RA_updzeros}
\end{figure}

We observe two major drawbacks associated to this rank-adaptive strategy. First, increasing the dimension of the reduced space does not result in a significant reduction of the growth of the numerical error. Second, the computational time required to solve the rank-adaptive hyper-reduced model exceeds the time required to solve the full order model, as shown in the first two rows of \Cref{tab:NLLD_RA_updnozeros_p}: this is due to the fact that, at each time step, the full order electric field is evaluated at all parameters to compute the error indicator. We address these two issues separately.

First, we consider a different strategy for the update of the coefficient matrices of the reduced solution by setting $\gamma=1$ in \Cref{algo:rank-update}. As in the previous test, the error indicator is computed at all time steps based on all test parameters. We also conduct a sensitivity analysis on the hyper-parameters appearing in the criterion \eqref{eq:update_criterion} for rank update by choosing four combinations of the constants $\cone$ and $\ctwo$. We present the evolution of the numerical errors and of the reduced basis dimension in \Cref{fig:NLLD_RA_updnozeros_p}. We also report the computational times and the average relative errors \eqref{eq:rel_err_avg} in \Cref{tab:NLLD_RA_updnozeros_p}.

\begin{figure}[H]
\centering
\includegraphics[]{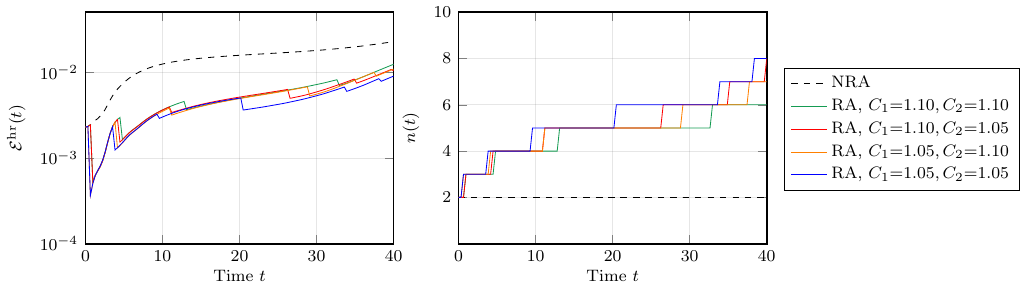}
\caption{\footnotesize NLLD. Evolution of the relative error \eqref{eq:rel_err} (left) and of the dimension of the reduced basis (right)
for the rank-adaptive and non-rank-adaptive algorithms.
Rank-adaptive system with $\gamma=1$ in \Cref{algo:rank-update} and different choices of $\cone$ and $\ctwo$ in \eqref{eq:update_criterion}. The error indicator is computed at all time steps using all test parameters.}\label{fig:NLLD_RA_updnozeros_p}
\end{figure}

\begin{table}[H]
\small
\begin{center}
\begin{tabular}{c|c|c}
    Model & Runtime  & $\avgsolerr$ \\ \hline
    NRA & 1137.7\,s & 1.46e-02 \\ \hline
    RA, $\gamma=0$, $\cone=1.05$, $\ctwo=1.05$ & 18086.9\,s & 1.19e-02 \\ \hline
    RA, $\gamma=1$, $\cone=1.10$, $\ctwo=1.10$ & 15968.4\,s & 5.33e-03 \\
    RA, $\gamma=1$, $\cone=1.10$, $\ctwo=1.05$ & 16188.0\,s & 5.00e-03 \\
    RA, $\gamma=1$, $\cone=1.05$, $\ctwo=1.10$ & 16117.7\,s & 4.95e-03 \\
    RA, $\gamma=1$, $\cone=1.05$, $\ctwo=1.05$  & 16354.8\,s & 4.31e-03
\end{tabular}\caption{\footnotesize NLLD. Computational time and average relative error in time \eqref{eq:rel_err_avg} for different rank-adaptive strategies, and comparison with the non-rank-adaptive case. The error indicator for rank adaptivity is computed at all times based on all $p$ test parameters. The FOM is solved in $15339.1\,s$.}\label{tab:NLLD_RA_updnozeros_p}
\end{center}
\end{table}

We remark that, when $\gamma=1$, the average relative error \eqref{eq:rel_err_avg} is around $5\cdot10^{-3}$, about a third of the value attained in the non-rank-adaptive model, and less than half of the error obtained with $\gamma=0$. We also observe that the constants $\cone$ and $\ctwo$ have an impact on the algorithm performances: by reducing their values, it is possible to increase the frequencies of the rank updates, which results in smaller errors at the price of a slightly higher computational time. While this test shows the benefit of initializing the rows of the coefficient matrices associated to new modes to some nonzero values, the computational cost of this strategy is still prohibitive, because the error indicator is computed at all times using all test parameters. We address this issue by means of the interpolation strategy outlined in \Cref{sec:rank-adaptive}. In this work, we consider second order Legendre polynomials as basis functions: we set $\nti=6$ in \eqref{eq:interp} and
\begin{align*}
    &\sbfnt_1(\prm) = 1, \quad \sbfnt_2(\prm) = \frac{\prma-\overline{\prma}}{\Delta\prma}, \quad \sbfnt_3(\prm) = \frac{\prmsd-\overline{\prmsd}}{\Delta\prmsd}, \quad \sbfnt_4(\prm) = \frac{1}{2}\left[3\left(\frac{\prma-\overline{\prma}}{\Delta\prma}\right)^2-1\right], \\& \sbfnt_5(\prm) = \frac{1}{2}\left[3\left(\frac{\prmsd-\overline{\prmsd}}{\Delta\prmsd}\right)^2-1\right], \quad \sbfnt_6(\prm) = \frac{\prma-\overline{\prma}}{\Delta\prma}\frac{\prmsd-\overline{\prmsd}}{\Delta\prmsd},
\end{align*}
where $\prm=(\prma,\prmsd)\in[\prma_L,\prma_R]\times[\prmsd_L,\prmsd_R]=\prms$ and
\begin{equation*}
    \overline{\prma} = \frac{\prma_L+\prma_R}{2}, \quad \overline{\prmsd} = \frac{\prmsd_L+\prmsd_R}{2}, \quad \Delta\prma = \frac{\prma_R-\prma_L}{2}, \quad \Delta\prmsd = \frac{\prmsd_R-\prmsd_L}{2}.
\end{equation*}
Then, we randomly select $\spei=\nti=6\ll p$ sample parameters that are used to compute the error indicator as in \Cref{algo:RA-EI} at each time step. Both the set of sample parameters and the basis functions $\sbfnt_i$, $i=1,\dots,\nti$, are fixed throughout the simulation. We test our strategy by solving the rank-adaptive scheme with $\gamma=1$ and $\cone=\ctwo=1.05$. We report the evolution of the numerical error and of the dimension of the reduced basis in \Cref{fig:NLLD_RA_updnozeros_pstar}.

\begin{figure}[H]
\centering
\includegraphics[]{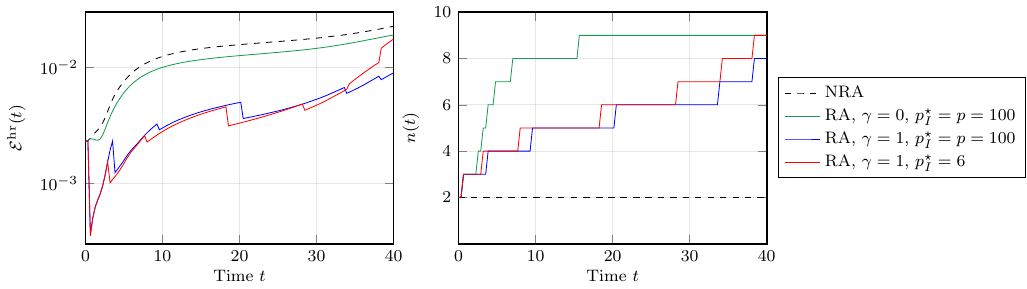}
\caption{\footnotesize NLLD.
Evolution of the relative error \eqref{eq:rel_err} (left) and of the dimension of the reduced basis (right)
for the rank-adaptive and non-rank-adaptive algorithms.
Rank-adaptive system with $\cone=\ctwo=1.05$ in \eqref{eq:update_criterion}.}\label{fig:NLLD_RA_updnozeros_pstar}
\end{figure}

\begin{table}[H]
\small
\begin{center}
\begin{tabular}{c|c|c}
    Model & Runtime  & $\avgsolerr$ \\ \hline
    NRA & 1137.7\,s & 1.46e-02 \\ \hline
    RA, $\gamma=0$, $\spei=p=100$  & 18086.9\,s & 1.19e-02 \\ \hline
    RA, $\gamma=1$, $\spei=p=100$  & 16354.8\,s & 4.31e-03 \\ \hline
    RA, $\gamma=1$, $\spei=6$ & 3300.6\,s & 4.61e-03
\end{tabular}\caption{\footnotesize NLLD. Computational time and average relative error in time \eqref{eq:rel_err_avg} for different rank-adaptive strategies, and comparison with the non-rank-adaptive case. In the rank-adaptive models, the parameters in the update criterion are $\cone=\ctwo=1.05$. The FOM is solved in $15339.1\,s$.}\label{tab:NLLD_RA_updnozeros_pstar}
\end{center}
\end{table}

The computational runtimes are reported in \Cref{tab:NLLD_RA_updnozeros_pstar}.
Since the full order electric field is only evaluated at a small number of sample parameters, the total computational time is drastically reduced. On the other hand, the average error in time $\avgsolerr$ is comparable to the one obtained with $\spei=p$. The total runtime can be reduced further, for example, by computing the error indicator every $\widehat{\delta}>1$ time steps. However, we observe that the accuracy of the hyper-reduced solution degrades slightly in the final stages of the simulation, as seen in \Cref{fig:NLLD_RA_updnozeros_pstar} (left). This can be addressed by either modifying the set of sample parameters or by considering a different set of basis functions for interpolation. The development of such strategies is left for future work.

Finally, we show in \Cref{fig:NLLD_RA_ham} the evolution of the Hamiltonian error \eqref{eq:err_ham}. Setting $\gamma=0$ in the rank-adaptive algorithm ensures better conservation properties, since the reduced solutions before and after each rank update coincide, as mentioned in \Cref{sec:coeff_upd}. On the other hand, the conservation error $\hamerr$ remains bounded when $\gamma=1$, and the results obtained with $\spei=6$ is comparable to those achieved with $\spei=p$.
\begin{figure}[H]
\centering
\includegraphics[]{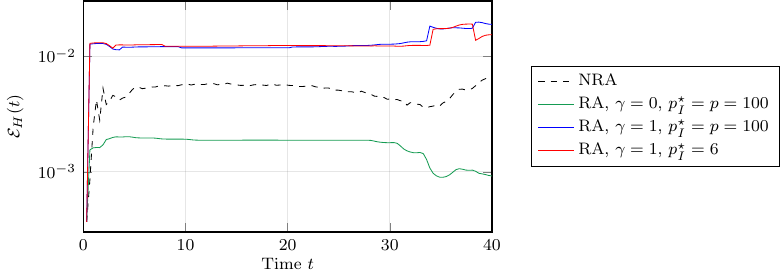}
\caption{\footnotesize NLLD. Evolution of the error in the Hamiltonian conservation \eqref{eq:err_ham}. Rank-adaptive system with $\cone=\ctwo=1.05$. }\label{fig:NLLD_RA_ham}
\end{figure}

\subsection{Two-stream instability}
As a second test case, we consider the two-stream instability (TSI) benchmark. For this test, the initial condition is as in \eqref{eq:incond} with
\begin{equation*}
    f_v(v;\prmsd) = \frac{1}{2\sqrt{2\pi\prmsd^2}}\text{exp}\left(-\frac{(v-3)^2}{2\prmsd^2}\right) + \frac{1}{2\sqrt{2\pi\prmsd^2}}\text{exp}\left(-\frac{(v+3)^2}{2\prmsd^2}\right).
\end{equation*}
Here the parameter $\prm=(\prma,\prmsd)$ varies in the parameter space $\prms=[0.009,0.011]\times[0.98,1.02]$ and the wavenumber is $\wn=0.2$. For spatial discretization, we choose $N=1.5\times10^5$ macro-particles and $N_x=64$ mesh intervals. The final time is $T=20$, and the time interval is discretized using $N_t=8000$ time steps, corresponding to $\Delta t = 0.0025$.

\begin{figure}[H]
    \centering
    \includegraphics[width=0.85\textwidth]{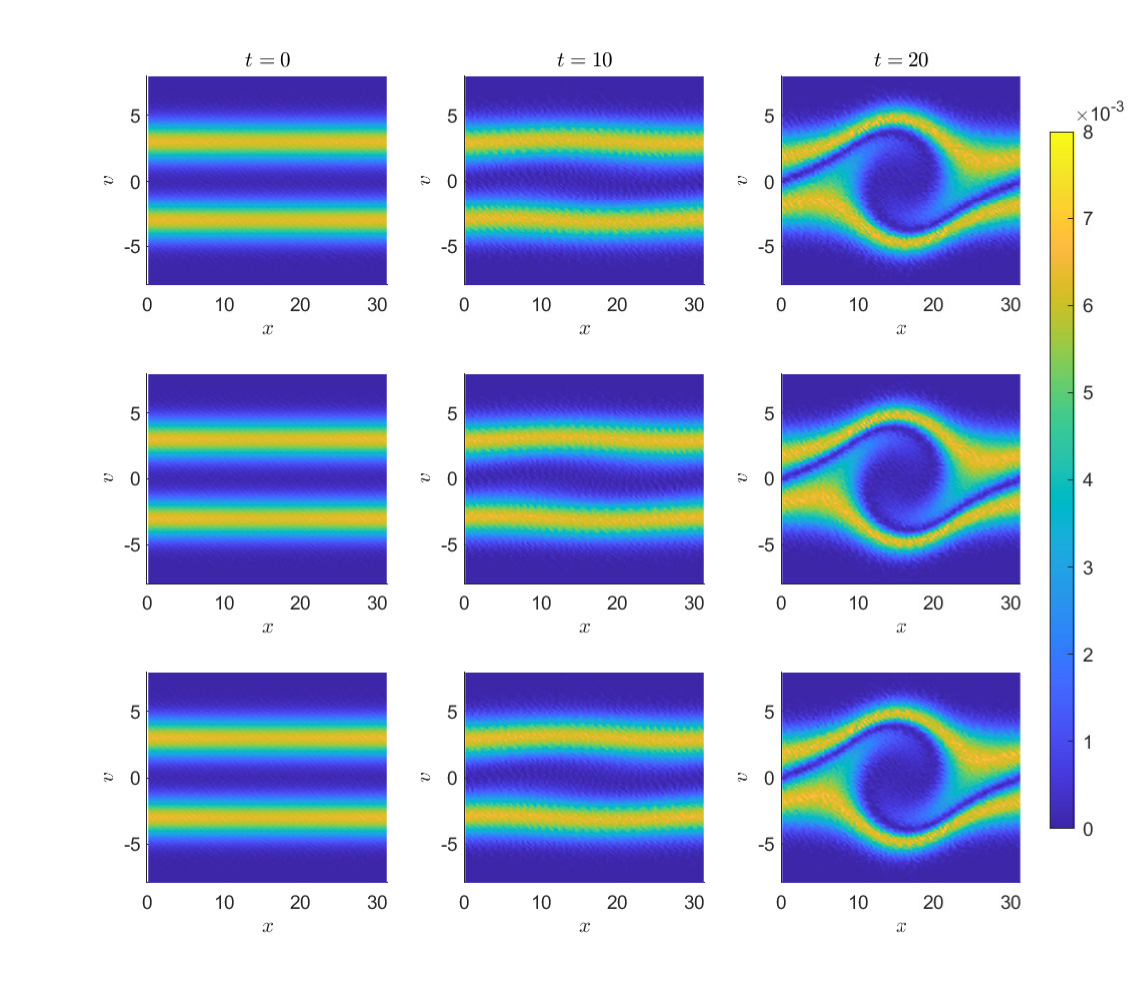}
    \vspace{-3em}
    \caption{\footnotesize TSI. Numerical distribution function $f_h(t,x,v;\prm)$ at times $t=0$ (left), $t=10$ (center) and $t=20$ (right) for $\prm=(\prma,\prmsd)=(0.0092, 1.0067)$. Comparison between the FOM (first row), the ROM with $n=3$ (second row), and the hROM with $n=3$ and $m$ varying (third row).}
    \label{fig:TSI_distfun}
\end{figure}

\Cref{fig:TSI_distfun} shows the distribution function at three time instants for one choice of the test parameter. The problem is characterized by an instability generated by two streams of charged particles moving in opposite directions transferring energy to the plasma wave. The numerical solution obtained with the non-rank-adaptive ROM and hROM with $n=3$ are in agreement with the solution obtained with the FOM.

We show in \Cref{fig:TSI_errors} the evolution of the relative error of the non-rank-adaptive ROM and hROM.
The corresponding computational runtimes are reported in \Cref{tab:TSI_ct}.
We observe a reduction of the computational time by a factor of $15$ and $13$ in the hROM with $n=2$ and $n=3$, respectively, with respect to the FOM.

\begin{figure}[H]
\centering
\includegraphics[]{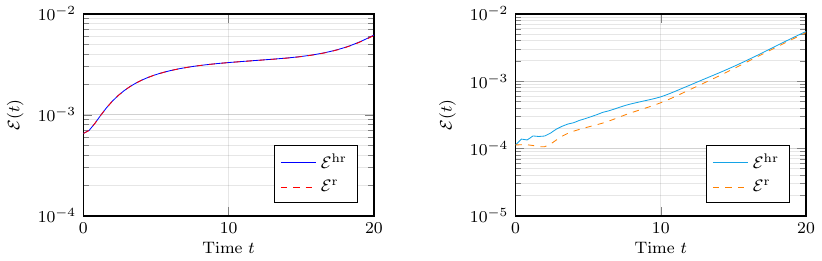}
\caption{\footnotesize TSI. Relative errors \eqref{eq:rel_err} of the ROM and hROM with respect to the FOM solution: $n=2$ (left) and $n=3$ (right).}\label{fig:TSI_errors}
\end{figure}

\begin{table}[H]
\small
    \vspace{0.5cm}
    \centering
    \begin{tabular}{c|c|c|c}
    $n=2$ & Runtime & $\solerr(T)$ & speedup \\
    \hline
    ROM & 10849.2\,s & 6.10e-03 & 0.96 \\ \hline
    hROM & 708.5\,s & 6.16e-03 & 14.67 \\ 
    \end{tabular}
    \qquad\qquad
    \begin{tabular}{c|c|c|c}
    $n=3$ & Runtime & $\solerr(T)$ & speedup \\
    \hline
    ROM & 11175.0\,s & 5.26e-03 & 0.93 \\ \hline
    hROM & 813.7\,s & 5.47e-03 & 12.77 \\
    \end{tabular}
    \caption{\footnotesize TSI. Computational runtimes and relative errors at the final time of the ROM and hROM for $n=2$ and $n=3$. We also report the reduction factor, defined as the ratio between the computational times in the FOM and the ROM or hROM. The full order model has dimension $2N=3\times10^5$, and it is solved in $10391.8\,s$. The number of test parameters is $p=100$.}\label{tab:TSI_ct}
\end{table}

The evolution of the dimension $m$ of the EIM space is shown in \Cref{fig:TSI_m}. The particle-to-grid map is only evaluated at most at $0.2\%$ and $0.4\%$ of the total number of particles for $n=2$ and $n=3$, respectively. We observe that these ratios are slightly lower than those obtained in \Cref{sec:NLLD}, yielding higher values of the reduction factors, as seen in \Cref{tab:TSI_ct}. We also remark that the growth of the EIM dimension in the two cases resembles the evolution of the numerical errors shown in \Cref{fig:TSI_errors}, implying the existence of a relationship between the reducibility of the solution and the reducibility of the nonlinear term.  
\begin{figure}[H]
\centering
\includegraphics[]{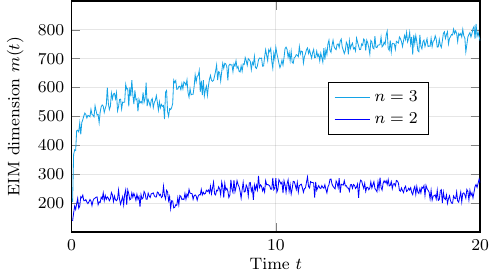}
\caption{\footnotesize TSI. Evolution of the dimension of the EIM approximation space.}\label{fig:TSI_m}
\end{figure}

Next, \Cref{fig:TSI_energy} depicts the electric energy in the ROM and hROM, and a comparison with the same quantity in the FOM for two randomly selected instances of the test parameter. The reduced models provide a good approximation at all times. Qualitatively similar results were observed for different values of $\prm$. Analogously, the average variation of the full order Hamiltonian evaluated at the hyper-reduced solution remains bounded in the time interval $[0,20]$, as shown in \Cref{fig:TSI_hamiltonian}.

\begin{figure}[H]
\centering
\includegraphics[]{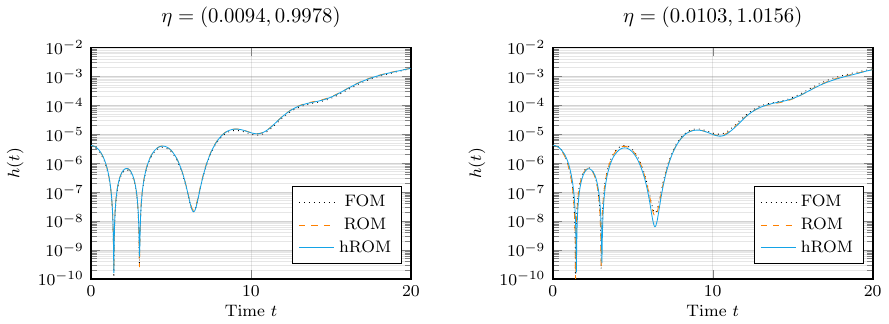}
\caption{\footnotesize TSI. Electric energy $\nlham(X(t,\prm))$ in the FOM, ROM, and hROM with $n=3$ for two random choices of the parameter $\prm$.}\label{fig:TSI_energy}
\end{figure}


\begin{figure}[H]
\centering
\includegraphics[]{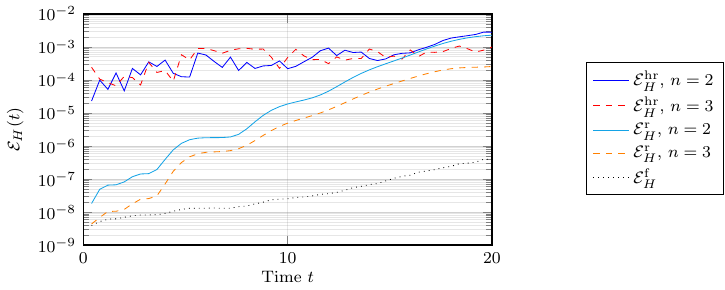}
\caption{\footnotesize TSI. Evolution of the error in the conservation of the Hamiltonian \eqref{eq:err_ham}. Comparison between the FOM, the ROM, and the hROM with $n=2$ and $n=3$.}\label{fig:TSI_hamiltonian}
\end{figure}

We conclude this section with the analysis of the computational cost of the hROM as a function of the number of test parameters. Results obtained with $n=3$ and $p$ ranging from $10^2$ to $10^4$ are reported in \Cref{fig:TSI_comptimes}, where the average runtime obtained in the first 100 time steps is shown. We observe that solving the hROM for $p=10^4$ test parameters is roughly as expensive as solving the ROM and FOM for $p=10^2$ test parameters, and around $100$ times cheaper than solving the FOM for the same value of $p$. 
\begin{figure}[H]
\centering
\includegraphics[]{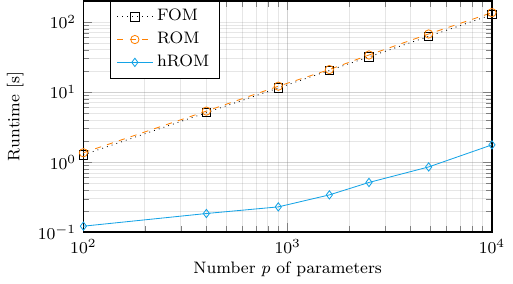}
\caption{\footnotesize TSI. Average runtime per time step as a function of the number $p$ of test parameters.}\label{fig:TSI_comptimes}
\end{figure}

\subsubsection{Rank adaptivity}
In the next experiment, we run the simulation for a longer time by setting $T=30$. As shown in \Cref{fig:TSI_numrank}, the numerical rank of the full order solution exhibits a moderate growth for $t>20$. This suggests that, in this new scenario, the accuracy of the non-rank-adaptive strategy might degrade over time, and it might be necessary to adapt the dimension of the reduced space. To address this problem, we numerically assess the performance of the rank-adaptive strategy presented in \Cref{sec:rank-adaptive}.

\begin{figure}[H]
\centering
\includegraphics[]{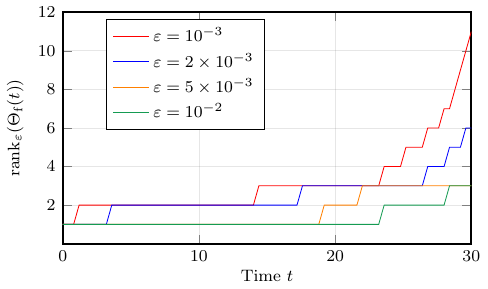}
\caption{\footnotesize TSI. Evolution of the $\varepsilon$-rank of the full order solution $\fos(t)$ for different values of $\varepsilon$.}\label{fig:TSI_numrank}
\end{figure}
First, we consider the case where the coefficient matrices of the reduced solution are augmented with rows of zeros at each rank adaptation, that is, we set $\gamma=0$ in \Cref{algo:rank-update}. Moreover, we compute the error indicator as in \Cref{algo:RA-EI} with $\spei=p$, and we set $\cone=\ctwo=1.05$ in the criterion \eqref{eq:update_criterion}. Results are shown in \Cref{fig:TSI_RA_updzeros}. The relative error in the hROM with respect to the full order solution is not significantly lower than in the non-rank-adaptive case (left plot) despite the increase of the reduced space dimension (right plot).
\begin{figure}[H]
\centering
\includegraphics[]{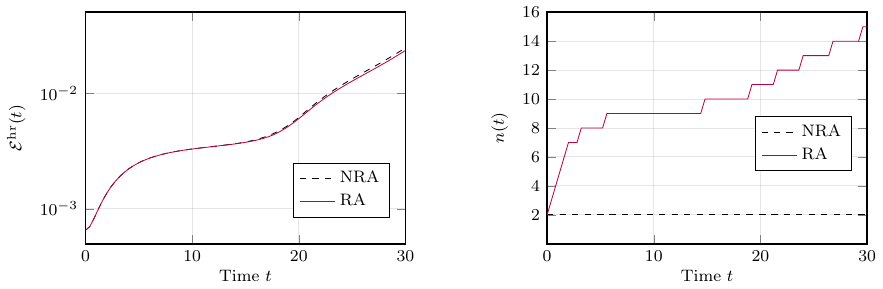}
\caption{\footnotesize TSI. Evolution of the relative error \eqref{eq:rel_err} (left) and of the dimension of the reduced basis (right) for the rank-adaptive and non-rank-adaptive algorithms.
Rank-adaptive system with $\gamma=0$ in \Cref{algo:rank-update} and $\cone=\ctwo=1.05$ in \eqref{eq:update_criterion}. The error indicator is computed at all time steps using all test parameters.}\label{fig:TSI_RA_updzeros}
\end{figure}

By setting $\gamma=1$ in \Cref{algo:rank-update}, it is possible to control the growth of the numerical errors, as reported in \Cref{fig:TSI_RA_updnozeros_p} for different choices of the parameters $\cone$ and $\ctwo$ in the criterion \eqref{eq:update_criterion}. Average relative errors in the time interval $[0,T]$ are shown in \Cref{tab:TSI_RA_updnozeros_p}: we observe a reduction by a factor of $5$ compared to the non-rank-adaptive case, and to the rank-adaptive case with $\gamma=0$. Nevertheless, the computational times are comparable to the FOM, because the error indicator is computed at all times based on all $p$ test parameters in these simulations.
\begin{figure}[H]
\centering
\includegraphics[]{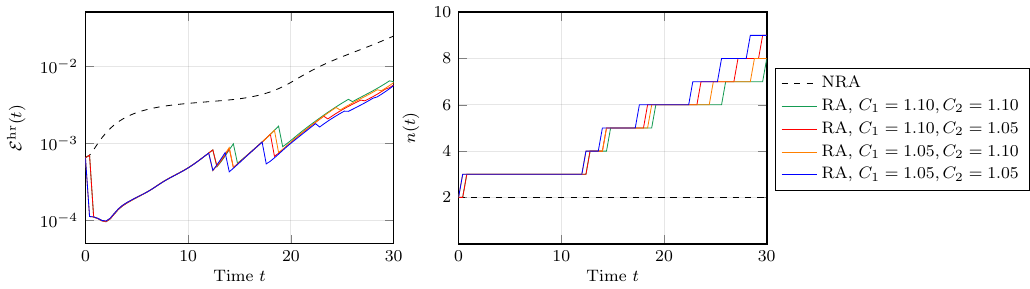}
\caption{\footnotesize TSI. Evolution of the relative error \eqref{eq:rel_err} (left) and of the dimension of the reduced basis (right) for the rank-adaptive and non-rank-adaptive algorithms.
Rank-adaptive system with $\gamma=1$ in \Cref{algo:rank-update} and different choices of $\cone$ and $\ctwo$ in \eqref{eq:update_criterion}. The error indicator is computed at all time steps using all test parameters.}\label{fig:TSI_RA_updnozeros_p}
\end{figure}

\begin{table}[H]
\small
\begin{center}
\begin{tabular}{c|c|c}
    Model & Runtime & $\avgsolerr$ \\ \hline
    NRA & 1115.1 & 6.82e-03 \\ \hline
    RA, $\gamma=0$, $\cone=1.05$, $\ctwo=1.05$ & 21304.4\,s & 6.59e-03 \\ \hline
    RA, $\gamma=1$, $\cone=1.10$, $\ctwo=1.10$ & 17942.8\,s & 1.51e-03 \\
    RA, $\gamma=1$, $\cone=1.10$, $\ctwo=1.05$ & 18087.9\,s & 1.33e-03 \\
    RA, $\gamma=1$, $\cone=1.05$, $\ctwo=1.10$ & 18023.9\,s & 1.40e-03 \\
    RA, $\gamma=1$, $\cone=1.05$, $\ctwo=1.05$ & 18194.7\,s & 1.22e-03 
\end{tabular}\caption{\footnotesize TSI. Computational time and average relative error in time \eqref{eq:rel_err_avg} for different rank-adaptive strategies, and comparison with the non-rank-adaptive case. The error indicator for rank adaptivity is computed at all times based on all $p$ test parameters. The FOM is solved in $10391.8\,s$.}\label{tab:TSI_RA_updnozeros_p}
\end{center}
\end{table}

To address this computational bottleneck, we consider the interpolation-based strategy described in \Cref{sec:coeff_upd}. The evolution of the numerical error and the reduced space dimension, and a comparison with the non-rank-adaptive case and the rank-adaptive strategy are shown in \Cref{fig:TSI_RA_updnozeros_pstar}. Finally, average relative errors $\avgsolerr$ and computational runtimes are reported in \Cref{tab:TSI_RA_updnozeros_pstar}. We remark that the error is reduced by a factor of $3.5$ compared to the non-rank-adaptive case, while the runtime is less than three times as much. Moreover, the accuracy of the rank-adaptive model with $\spei=6$ is comparable to that achieved with $\spei=p=100$, with a significant reduction of the computational time.
\begin{figure}[H]
\centering
\includegraphics[]{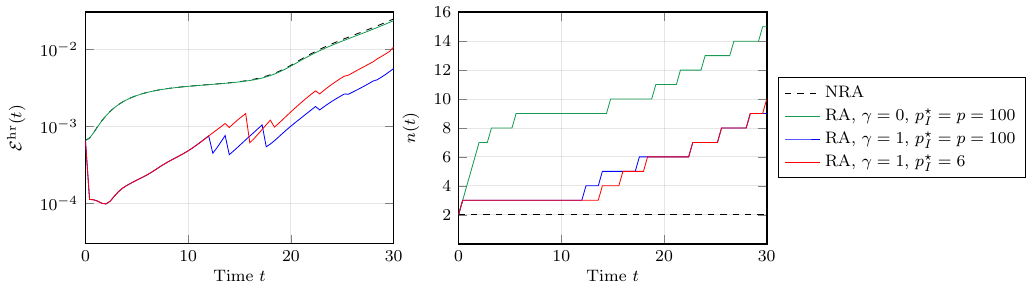}
\caption{\footnotesize TSI. Evolution of the relative error \eqref{eq:rel_err} (left) and of the dimension of the reduced basis (right) for the rank-adaptive and non-rank-adaptive algorithms.
Rank-adaptive system with $\cone=\ctwo=1.05$ in \eqref{eq:update_criterion}. }\label{fig:TSI_RA_updnozeros_pstar}
\end{figure}

\begin{table}[H]
\small
\begin{center}
\begin{tabular}{c|c|c}
    Model & Runtime & $\avgsolerr$ \\ \hline
    NRA & 1115.1\,s & 6.82e-03 \\ \hline
    RA, $\gamma=0$, $\spei=p=100$ & 21304.4\,s & 6.59e-03 \\ \hline
    RA, $\gamma=1$, $\spei=p=100$ & 18194.7\,s & 1.22e-03 \\ \hline
    RA, $\gamma=1$, $\spei=6$ & 3237.3\,s & 2.01e-03
\end{tabular}\caption{\footnotesize TSI. Computational time and average relative error in time \eqref{eq:rel_err_avg} for different rank-adaptive strategies, and comparison with the non-rank-adaptive case. In the rank-adaptive case, the parameters in the update criterion are $\cone=\ctwo=1.05$. The FOM is solved in $10391.8\,s$.}\label{tab:TSI_RA_updnozeros_pstar}
\end{center}
\end{table}

Finally, we show in \Cref{fig:TSI_RA_ham} the evolution of the error $\hamerr$ \eqref{eq:err_ham}. Compared to the NLLD benchmark, the models we considered do not exhibit significant differences in terms of Hamiltonian conservation.
\begin{figure}[H]
\centering
\includegraphics[]{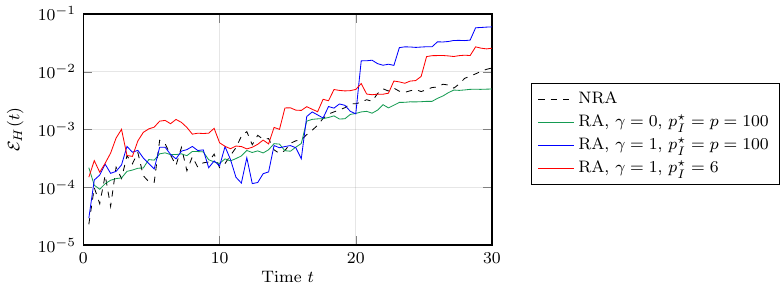}
\caption{\footnotesize TSI. Evolution of the error in the Hamiltonian conservation \eqref{eq:err_ham}. Rank-adaptive system with $\cone=\ctwo=1.05$.}\label{fig:TSI_RA_ham}
\end{figure}

\section{Conclusions, limitations and future work}

We have presented an adaptive hyper-reduction strategy to reduce the computational complexity of evaluating non-sparse nonlinear operators. As a particular application, we have focused on the Hamiltonian system arising from a particle-based discretization of the parametric Vlasov-Poisson equation.
In order to exploit the local low-rank nature of the problem, we have developed an adaptive strategy to update both the reduced space and the EIM hyper-reduction space over time. To this end, we have proposed a parameter sampling technique so that the cost of performing the updates does not depend on the product of the full order dimension and the total number of test parameters. Finally, we have proposed a rank-adaptive procedure to increase the dimension of the reduced basis space in the case of high variations of the rank of the full order solution. The rank adaptation is based on an error indicator that can be constructed efficiently thanks to a suitable interpolation strategy.

A current limitation of the proposed method is that the reducibility properties of the nonlinear term projected onto the reduced space degrade as the reduced space dimension increases. Although our strategy performs best when applied to problems whose solutions are locally low-rank at each time, its performance might deteriorate in rank-adaptive settings with frequent updates of the reduced basis dimension. While the adaptive choice of a tolerance for the construction of the EIM space might be beneficial, this bottleneck can ultimately be addressed by further investigating the interplay between the dimension of the reduced space and the low-rank structure of projected nonlinear operators.
We also mention that in our strategy we construct and evolve one reduced basis for all test parameters. This is useful in situations where a solution has to be computed for a large number of parameters, which are assumed to be given and fixed: in this setting, numerical evidence shows that the hyper-reduced model can achieve speedups of orders of magnitude compared to the full order model. However, this approach is not suitable for scenarios where either the set of parameters is not known a priori or the solution exhibits high variability across the parameter space. In these cases, one could combine the proposed strategy with parameter estimation techniques, or construct reduced bases locally in the parameter space.

Another possible direction for future extensions of this work is the development of different interpolation schemes for the error indicator and the study of their impact on the overall performance of the rank-adaptive model.

\appendix

\section{The case $\pdeg=1$}\label{sec:k1}
In this section we derive the explicit expressions of the constants involved in the error bound of \Cref{prop:error_bound_EIM} in the particular case of a uniform spatial grid $0=x_0<x_1<\dots<x_{N_x}=\ell_x$ with step size $\Delta x$, and piecewise linear polynomials, $\pdeg=1$. This is the scenario adopted in the numerical experiments of \Cref{sec:num_exp}. In this case, the dimension of the finite element space $\mathcal{P}_1(\Omega_x)$ is $\fedim+1=N_x$, and a basis is given by the hat-functions $\{\lambda_i(x)\}_{i=1}^{N_x}$
The vector $\sbf$ introduced in \Cref{sec:semidiscr} is $\sbf=\Delta x\mathbf{1}_\fedim$, and $T$ is the tridiagonal matrix
\begin{equation*}
    T:=(\Delta x)^{-1}\text{tridiag}(-1,2,-1)\in\Rbb^{(N_x-1)\times (N_x-1)}.
\end{equation*}
The eigenpairs $\{(\delta_\idx,\Vbf^\idx)\}_{\idx=1}^{N_x-1}$ of $T$ are \cite{CY00}
\begin{equation*}
    \delta_\idx = 2(\Delta x)^{-1}\left(1+\cos\left(\displaystyle\frac{\idx\pi}{N_x}\right)\right),\qquad
    V^\idx_j=(-1)^j\sqrt{\frac{2}{N_x}}\sin{\left(\frac{j\idx\pi}{N_x}\right)},\quad j=1,\ldots N_x-1.
\end{equation*}
A simple computation shows that
\begin{equation*}
    \sum_{j=1}^{N_x-1}V^\idx_j=\begin{cases}
        0 & N_x+\idx \text{ even} \\
        -\sqrt{\dfrac{2}{N_x}}\sin\left(\dfrac{\idx\pi}{N_x}\right)\left(1+\cos\left(\dfrac{\idx\pi}{N_x}\right)\right)^{-1} & N_x+\idx \text{ odd}
    \end{cases}.
\end{equation*}
Therefore, the terms in the Hamiltonian decomposition \eqref{eq:Ham_dec} are
\begin{equation*}
    \alpha_\idx=\begin{cases}
        0 & N_x+\idx \text{ even} \\
        -\dfrac{\ell_x}{N_x^2}\sqrt{\dfrac{N}{2}}\sin\left(\dfrac{\idx\pi}{N_x}\right)\left(1+\cos\left(\dfrac{\idx\pi}{N_x}\right)\right)^{-3/2} & N_x+\idx \text{ odd}
    \end{cases},
    \quad
    \boldsymbol{\beta}_i^\idx=-\frac{\ell_x}{2\sqrt{NN_x}}\left(1+\cos\left(\dfrac{\idx\pi}{N_x}\right)\right)^{-1/2},
\end{equation*}
for all $i=1,\ldots, N$,
and the constant $K_1$ in the error bound given by \Cref{prop:error_bound_EIM} is
\begin{equation*}
    K_1 = \ell_x N_x^{-1/2}\left(1-\cos\left(\dfrac{\pi}{N_x}\right)\right)^{-1/2}.
\end{equation*}
We observe that this quantity is independent of $N$ and it is proportional to $\sqrt{N_x}$ when $N_x$ is large. Moreover, since
\begin{equation*}
    \max_{x\in\Omega_x}\bigg\lvert\sum_{j=1}^\kappa V_j^\idx\lambda_j^\prime(x)\bigg\rvert=\max_j\frac{\lvert V^{\idx}_{j+1}-V^{\idx}_j\rvert}{\Delta x}=\frac{2}{\ell_x}\sqrt{2N_x}\left\lvert\cos\left(\frac{\idx\pi}{2N_x}\right)\right\rvert,
\end{equation*}
the constant $K_2$ satisfies
\begin{equation*}
    K_2 \leq K_1 N^{-1/2}\max_\idx\norm{((\deimb{\idx})^\top \deimi{\idx})^{-1}(\deimb{\idx})^\top\mathbf{1}_N}\leq K_1\max_\idx\norm{((\deimb{\idx})^\top \deimi{\idx})^{-1}}_2.
\end{equation*}

\printbibliography

\end{document}